\theoremstyle{plain}%
\newtheorem{theorem}{Theorem}
\newtheorem{proposition}{Proposition}%
\newtheorem{lemma}{Lemma}
\newtheorem{corollary}{Corollary}
\theoremstyle{remark}%
\newtheorem{remark}{Remark}%
\theoremstyle{definition}%
\newtheorem{definition}{Definition}%
\numberwithin{equation}{section}
\newcommand{\inn}[2]{\left\langle#1,\,#2\right\rangle}
\DeclareMathOperator{\ran}{ran}
\DeclareMathOperator{\spn}{span}
\newcommand{\grad}{\nabla}
\newcommand{\Lap}{\Delta}
\newcommand{\di}{\partial}
\DeclareMathOperator{\Ric}{Ric}
\DeclareMathOperator{\Sc}{Sc}
\newcommand{\eps}{\epsilon}
\newcommand{\ls}{\lesssim}
\newcommand{\g}{\gamma}
\newcommand{\al}{\alpha}
\newcommand{\Si}{\Sigma}
\renewcommand{\b}{\bar}
\newcommand{\Rb}{\mathbb{R}}
\newcommand{\W}{\mathcal{W}}
\newcommand{\Ga}{\Gamma}
\renewcommand{\l}{\lambda} 
\newcommand{\Acirc}{\mathring{A}} 
\newcommand{\Sb}{\mathbb{S}} 
\renewcommand{\b}{\bar} 
\newcommand{\z}{\zeta}
\newcommand{\OO}[2][k]{O_{H^{#1}}(#2)}
\newcommand{\abs}[1]{\left\lvert#1\right\rvert}
\newcommand{\norm}[1]{\lVert#1\rVert}
\newcommand{\Set}[1]{\left\{#1\right\}}
\newcommand{\fullfunction}[5]{\ensuremath{
		\begin{array}{ccrcl}
			{#1}    & \colon  & {#2} & \longrightarrow & {#3} \\
			\mbox{} & \mbox{} & {#4} & \longmapsto     & {#5}
\end{array}}}
\newcommand{\od}[2]{\ensuremath{\frac{d#1}{d#2}}}
\newcommand{\md}[6]{\ensuremath{
		\ifinner
		\tfrac{\partial{^{#2}}#1}{\partial{#3^{#4}}\partial{#5^{#6}}}
		\else
		\tfrac{\partial{^{#2}}#1}{\partial{#3^{#4}}\partial{#5^{#6}}}
		\fi
}}
\newcommand{\del}[1]{\left(#1\right)}
\newcommand{\thmref}[1]{Theorem~\ref{#1}}
\newcommand{\defnref}[1]{Definition~\ref{#1}}
\newcommand{\secref}[1]{Section~\ref{#1}}
\newcommand{\lemref}[1]{Lemma~\ref{#1}}
\newcommand{\propref}[1]{Proposition~\ref{#1}}
\newcommand{\remref}[1]{Remark~\ref{#1}}
\begin{document}
	
	\title{Adiabatic theory for the area-constrained Willmore flow}
	\author{Jingxuan Zhang 
		(\begin{CJK*}{UTF8}{gbsn}张景宣
		\end{CJK*})$^1$
		\footnote{Email: \url{jingxuan.zhang@math.ku.dk}}
	}
	\date{%
		$^1$	University of Copenhagen\\
		Department of Mathematical Sciences\\
		Universitetsparken 5\\
		2100 Copenhagen, Denmark\\[2ex]%
		\today
	}
	\maketitle
	
	\begin{abstract}
		In this paper, we develop an adiabatic theory for the evolution of large closed surfaces under the area-constrained Willmore (ACW) flow in a three-dimensional asymptotically Schwarzschild manifold. We construct explicitly a map, defined on a certain four-dimensional manifold of barycenters, which characterizes key static and dynamical properties of the ACW flow. In particular, using this map, we find an explicit four-dimensional effective dynamics of barycenters, which serves as a uniform asymptotic approximation for the (infinite-dimensional) ACW flow, so long as the initial surface satisfies certain mild geometric constraints (which determine the validity interval). Conversely, given any prescribed flow of barycenters evolving according to this effective dynamics, we construct a family of surfaces evolving by the ACW flow, whose barycenters are uniformly close to the prescribed ones on a large time interval (whose size depends on the geometric constraints of initial configurations).
	\end{abstract}

	\section{Introduction}
	Let $(M,g)$ be a $3$-dimensional, complete, oriented Riemannian manifold with non-negative curvature. Consider the area-constrained Willmore (ACW) flow,
	\begin{equation}
		\label{W}
		\di_t x ^N = -W(x)-\l H(x).
	\end{equation}
	Here, for $t\ge0$, $x=x_t:\Sb \to M$ is a family of embeddings of spheres (with orientation compatible with that on $M$). $\di_tx^N$ denotes the normal velocity at $x$,
	given by $\di_tx^N:=g(\di_tx,\nu)$, where $\nu=\nu(x)$ is the unit normal vector to $\Si$ at $x$.
	$H(x)$ denotes the mean curvature scalar at $x$, and
	$$W(x):=\Lap H(x)+H(x)(\Ric_M(\nu,\nu)+\lvert\Acirc\rvert^2(x))$$ is the Willmore operator,
	where $\Acirc(x)$ denotes the traceless part of the second fundamental form.
	The number $\l=\l(t)$ is given by
	$$\l=\frac{1}{\int_\Si H^2}\int_\Si\del{\abs{\grad H}^2-H^2\Ric_M(\nu,\nu)-H^2\lvert\Acirc\rvert^2}\,d\mu,$$
	where $\mu=\mu_\Si^g$ is the  canonical measure on $\Si$, induced by the embedding $x$ and background metric $g$.
	This choice of $\l$ ensures that \eqref{W}
	is area-preserving, see Appendix \ref{sec:ap}.
	
	Mathematically, the ACW flow \eqref{W} can be viewed as the $L^2$-gradient flow
	of the Willmore energy 
	$$
	\W(\Si)=\frac{1}{4}\int_\Si H^2\,d\mu,
	$$
	which is a well-defined $C^2$-functional in a suitable configuration space. 
	Denote by $d\W(x)$ the Fr\'echet derivative of $\W$.
	Define the normal $L^2$-gradient $\grad ^N \W(x)\phi:=d\W(x)\phi$  for every normal,
	area-preserving variation 
	$\phi$ on the surface $\Si=x(\Sb)$. 
	Then
	$\grad ^N \W(x)$ is given by the r.h.s. of \eqref{W}.
	Hence, for a flow of immersions $x=x_t,\,t\ge0$ in the configuration space of $\W$, we can rewrite \eqref{W} as 
	$$\di_t x^N=\grad ^N \W(x).$$
	See \secref{sec:config} for more details.
	
	Physically, the ACW flow \eqref{W} is the gradient flow of the Hawking mass of embedded $2$-spheres,
	defined in \eqref{haw} below and  first introduced in 
	\cite{MR3960907}. In mathematical biology, building upon Helfrich's seminal work
	\cite{Helfrich}, the flow \eqref{W} 
	and its variants are used to model the motion of  cell membranes.
	More recently, \eqref{W} finds applications to nonlinear elasticity \cites{FGJ,MR1916989},
	among various other fields in applied science.
	
	\subsection{Main results}\label{sec:main}
	In this paper, we develop a rigorous adiabatic (or slow-motion approximation)
	theory for \eqref{W}. To state our main result, 
	denote by $H^k$ the Sobolev space of order $k$. 
	Let
	$$
	X^k:= H^k(\Sb,M),\quad X^k_c:=\Set{x\in X^k:\abs{x(\Sb)}=c}.
	$$
	Here and below, for a surface $\Si:=x(\Sb)\subset M$, denote by $$\abs{\Si}:=\int_\Si\,d\mu$$ the area of $\Si$.
	Following the nomenclature in \cite{MR2785762}, we call (the images of) static solutions to \eqref{W} \textit{surfaces of Willmore type}.

	Then we prove the following:
	\begin{theorem}[Main]
		\label{thm3}
		Let $k\ge4,\,c\gg1$. 
		Fix  $R\gg1,\,\delta\ll1$ in \defnref{defn1.1}
		of admissible surfaces.
		
		Then there exists 
		a map $$\Psi:M':=\Rb_{>R}\times B_1(0)\subset \Rb\times \Rb^3\to X^k$$ with 
		the following property: Denote by $\z\equiv(r,z)\in M'$
		and $v_\z\equiv \Psi(\z)\in X^k$. 
		\begin{enumerate}
			\item (Critical point) The immersion $v_\zeta$ parametrizes a surface of Willmore type if and only if $\z$
			is a critical point of the function $\W\circ \Psi:\Rb^4\to \Rb$, 
			restricted to the submanifold $\Set{\z\in M': \abs{v_\z(\Sb)}=c}$.
			\item (Stability) Suppose $v_\zeta$ parametrize an admissible surface of Willmore type.
			Then $v_\zeta$ is uniformly stable
			with small
			area-preserving $H^k$-perturbation		
			\footnote{This means that if $y_0$ is another admissible surface that is
				$H^k$-close to $v_\zeta$ in
				the topology given in \defnref{defn2}, then for every $\eps>0$ there
				exists $T>0$ such that  $\norm{v_t-y_t}_{X^k}<\eps$ for all $t\ge T$,
				where $v_t,\,y_t$ are respectively the flows generated by $v_\zeta,\,y_0$
				under \eqref{W}.} 
			if $\z$ is a strict local
			minimum of the function $\W\circ \Psi$
			restricted to the submanifold $\Set{\z\in M': \abs{v_\z(\Sb)}=c}$.
			\item (Effective dynamics) 	
			Let $\Si_*=x_*(\Sb)$ be an admissible surface. Let $\Si_t=x_t(\Sb),\,t\ge0$
			be the global solution to \eqref{W}
			with initial configuration $\Si|_{t=0}=\Si_*$ as in \thmref{thm1}.
			
			Then there exist $\al>0$,  $T=O(R^{-\al})$, and
			a path $\z_t\in M',\,t\ge0$, such that
			for every $t\ge T$, there holds
			\begin{equation}
				\label{2.0}
				\norm{v_{\z_t}-x_t}_{X^k}=O(R^{-3}).
			\end{equation}
			
			Moreover, the path $\z_t\equiv(r_t,z_t)$ evolves according to 
			\begin{align}
				\label{2}
				\dot z=&\frac{1}{4\pi}\grad_z \W\circ \Psi (r,z)+O(R^{-3}),\\
				\label{2.0.1}
				\dot r=&4 R^{-2}+O(R^{-3}).
			\end{align}
			In \eqref{2} the leading term is of the order $O(R^{-2})$.
			
			\item Conversely, if $\z_t\in M'$ is a flow evolving according to \eqref{2}-\eqref{2.0.1},
			then there exists a global solution $x_t$ to \eqref{W} such that \eqref{2.0} holds for this choice  of $\z_t$ and every
			$T\le t\le T+R$. 
		\end{enumerate}
	\end{theorem}

	\begin{remark}
		The map $\Psi$ in \thmref{thm3} is defined by $\Psi(r,z)=\theta(\Phi(r,z),r,z)$,
		where $\theta,\, \Phi$ are given respectively in \eqref{3} and \defnref{LSM}.
		Following \cite{MR3824945}, we call $\Phi$ (or equivalently $\Psi$) \textit{the Lyapunov-Schmidt map}.
	\end{remark}	
	\begin{remark}
		The content of \thmref{thm3} says that the static and dynamical properties of \eqref{1}
		are  captured by the effective action $\W\circ \Psi$, 
		on the  $3$-manifold given by the level set $\Set{\z\in M': \abs{v_\z(\Sb)}=c}$.
	\end{remark}
	\begin{remark}	
		\eqref{2.0}-\eqref{2.0.1} constitute the crucial part of the theorem,
		which says that the infinite dimensional dynamical system \eqref{W} reduces 
		uniformly in time to the finite system of ODEs, \eqref{2}-\eqref{2.0.1}.
	\end{remark}
	
	
	On the analytical ground, roughly speaking, \thmref{thm3} shows that the pullback by the Lyapunov-Schmidt map
	$\Psi$ to $M'$ essentially reduces key static and dynamical properties of
	the infinite dimensional dynamical system \eqref{W}
	to finite dimensional ones. 
	
	For applications to various physical models, our results in this paper provide an explicit
	$4$-dimensional dynamical system, \eqref{2},
	whose behavior approximates that of \eqref{W} uniformly for all sufficiently large time,
	and captures key qualitative behaviours of \eqref{W}. 
	This can potentially reduce computational complexity for the
	complicated forth order PDE \eqref{W}.

	\subsection{Historical remarks}
	The problem we study here is motivated by a recent work  \cite{eichmair2021large} by M. Eichmair and T. Koerber,
	in which the authors study stationary solutions to \eqref{W} using Lyapunov-Schmidt reduction.
	Here we derive some dynamical analogues of the static existence
	results in \cite{eichmair2021large}, with, however, rather different focus. Indeed,
	the main point of our results is that we have  1. \textit{explicit} information about the adiabatic
	parts of a flow of surfaces evolving according to \eqref{W}, with 2. \textit{uniformly small} errors in time,
	and 3. we can construct solution \eqref{W} with \textit{prescribe} adiabatic behavior. 
	See the precise statements of these points in \thmref{thm3}.
	
	The method of adiabatic approximation has a long history in classical field theory.
	See \cite{MR2360179} for an excellent review in this context. 
	Our work here is inspired by
	a series of papers by I.M. Sigal with several co-authors, in which the adiabatic theory
	is adapted to geometric problems \cites{sigal2012stability,MR2465296,MR3803553,MR3824945}.
	Other results along this line which we have referred to include \cites{MR1369419,MR3251832,MR4160305,chodosh2017global,MR4303943,MR3397388},
	which cover a range of static and dynamical problems of geometric equations using
	Lyapunov-Schmidt reduction.

	Among the papers above, we single out a recent paper \cite{MR3824945}, in which the authors study a formally similar problem (namely, the volume-preserving mean curvature flow with initial configurations close to small geodesics spheres), from which we draw much inspiration.
	In particular, it appears that the notion of Lyapunov-Schmidt map  
	is first mentioned in this paper.
	
	It seems to us that our results is the first rigorous adiabatic theory for the area-constrained  Willmore flow.
	We expect these results to be robust, in the sense that they can be easily adapted to problems related to \eqref{W}, for instance, using the generalized Willmore energy developed in \cite{Friedrich2020} (which covers, among others, the applications to biomembranes). 
	In a separate paper, we will treat the abstract properties of the Lyapunov-Schmidt map defined
	in Appendix \ref{A0}.

	\section{Setup of the problem}
	\subsection{Asymptotically Schwarzschild manifolds}
	\label{sec:1.1}
	A  $3$-dimensional complete Riemannian manifold $(M,g)$ is said to be
	$C^k$-close to Schwarzschild if the following holds:
	\begin{enumerate}
		\item $M\setminus K$ is diffeomorphic
		to $\Rb^3\setminus \overline{B_1(0)}$ for some compact subset $K\subset M$.
		\item The metric  $g$ splits as $g_S+h$,
		where $$g_S:=\del{1+\frac{m}{2\abs{x}}}^4\delta_{ij}$$ is the Schwarzschild metric
		with ADM mass $m>0$, 
		and $h\in C^k$ is
		a small perturbation satisfying 
		\begin{equation}\label{h}
			h_{ij}=h_{ji}, \quad\di^\alpha h_{ij}\le \eta \abs{x}^{-(2+\abs{\alpha})}\quad (\abs{\alpha}\le k,\;\abs{x}\gg1),
		\end{equation} 
		for some fixed
		small decay coefficient $\eta\ll1$.
		Here $x\in \Rb^3$ denotes the coordinate in the 
		asymptotic chart on $M$.
	\end{enumerate}
	Physically, for applications to  general relativity,
	such manifold $M$ is a perturbation of the static Schwarzschild
	black hole $(\Rb^3\setminus \overline{B_{m/2}(0)},g_S)$. 
	
	To simplify notations, throughout the paper we normalize ADM mass to be $m=2$.
	We assume that the ambient space $M$ is $C^k$-close to Schwarzschild for sufficiently large $k$,
	and that in \eqref{h} the decay coefficient $\eta\ll1$.
	Thus in what follows we take 
	$$(M,g)= (\Rb^3\setminus \overline{B_{1}(0)},g_S+h)$$ where $h$ is as in \eqref{h}. 
	
	To use results in \cites{MR2785762,MR4236532,eichmair2021large}, we assume the scalar curvature
	$\Sc$ on $M$ satisfies the following decay properties:
	\begin{align}
		\label{1.2}
		x^j\di_{x^j}\del{\abs{x}^2\Sc}=&o(\abs{x}^2),\\
		\label{1.3}
		\Sc(x)-\Sc(-x)=	&o(\abs{x}^4).
	\end{align}
	The asymptotic flatness
	condition \eqref{1.2} is satisfied if $g$ is $C^k$-close to Schwarzschild with $k\ge 4$,
	and  $\Sc=o(\abs{x}^4)$, in
	which case $\grad \Sc=o(\abs{x}^{-5})$. 
	\eqref{1.3} means the 
	scalar curvature on $M$ is asymptotically even.
	Geometrically, condition \eqref{1.2} provides quantitative control
	for various estimates involving extrinsic geometric quantities.
	Condition \eqref{1.3} provides qualitative control of the
	effective action in Secs. \ref{sec:3}, \ref{sec:A2}.
	
	\subsection{The geometric structure of \eqref{W}}
	\label{sec:config}
	In this subsection, we lay out the geometric structure of ACW flow \eqref{W}.
	This structure is understood in the subsequent developments in Secs. \ref{sec:3}-\ref{sec:4}.
	
	Let $c\gg1, k\ge4$ be given. 
	Recall that in \secref{sec:main}, we have defined the configuration spaces
	\begin{equation}
		\label{1}
		X^k:= H^k(\Sb,M),\quad X^k_c:=\Set{x\in X^k:\abs{x(\Sb)}=c},
	\end{equation}
	where $\abs{\Si}:=\int_\Si\,d\mu_\Si^g$ denotes the area of $\Si$ w.r.t. the embedding $x$ and background metric $g$.
	One can check easily that \eqref{W} is well-defined in $X^k_c$.
	The spaces in \eqref{1} are equipped with the $L^2$-inner product
	\begin{equation}
		\label{inn}
		\inn{\phi}{\phi'}:=\int_\Sb \inn{\phi}{\phi'}_{\text{Euclidean}}\quad (\phi,\phi'\in X^k).
	\end{equation}
	
	Let $x\in X^k$ and write $\Si=x(\Sb)$. The tangent spaces to $x$ at $X^k$ and $X^k_c$ are respectively  given by
	\begin{align}
		\label{tan1}
		T_x X^k =& X^k,\\
		\label{tan2}
		T_x X^k_c=&\Set{\phi\in T_x X^k:\int_\Si H  g(\phi,\nu)=0}.
	\end{align}
	Here, \eqref{tan2} is due to the well-known first variation formula of the area functional.
	Notice that, slightly abusing notation, in \eqref{tan2} we view $\phi$ as a 
	vector field over $\Si$. 
	With \eqref{inn}, we have a formal Riemannian structure on the
	configuration spaces $X^k$ and $X^k_c$.

	With this geometric structure of $X^k$, one can view
	the equation \eqref{W} as the $L^2$-gradient flow, restricted to $X^k_c$,
	of the Willmore energy
	\begin{equation}
		\label{WE}
		\W(\Si)=\frac{1}{4}\int_\Si H^2\,d\mu_\Si^g.
	\end{equation}
	Using Sobolev inequalities, one can show that for 
	$k\ge4$, the functional $\W$ is well-defined and 
	$C^2$ (in the sense of Fr\'echet derivatives) on $X^k_c$.
	
	Let $d\W(x):T_x X^k_c\to T_x X^{k-4}_c$ be the Fr\'echet derivative of $\W$ at an embedding
	$x$ in the class $X^k_c$. 
	Define the normal $L^2$-gradient $\grad ^N \W(x)\phi:=d\W(x)\phi$  for every normal,
	area-preserving variation 
	$\phi$ on the surface $\Si=x(\Sb)$. 
	(This operator $\grad ^N$ depends on $x$.)
	Then by the first variation
	formula of the Willmore energy (see e.g. \cite{MR2780248}*{Sec. 3}),
	this $\grad ^N \W(x)$ is given by the r.h.s. of \eqref{W}.
	This allows us to rewrite \eqref{W} as 
	$$\di_t x^N=\grad ^N \W(x)\quad (x\in X^k_c).$$

	Equivalently, \eqref{W} is the (negative) $L^2$-gradient flow of the Hawking mass,
	\begin{equation}
		\label{haw}
		m_\text{Haw}(\Si):=\frac{\abs{\Si}^{1/2}}{(16\pi)^{3/2}}\del{16\pi-\frac{1}{2}\int_\Si H^2\,d\mu_\Si^g},
	\end{equation}
	in the sense that a flow of surfaces 
	evolving according to \eqref{W} increases the mass $m_\text{Haw}$.
	For interests from physics related to  this problem, especially in general relativity,  
	see \cite{MR645761}.
	
	\subsection{Preliminary results}
	
	Let $R\gg1$ be given. Let  $K\subset M$ be a fixed compact set as in \secref{sec:1.1}.
	As explained in the last subsection, for asymptotically Schwarzschild manifold $M$,
	we can identify 
	the $M\setminus K$ with its coordinate space
	$\Rb^3\setminus\overline{B_R(0)}$.
	
	Let $\delta>0$ be given. 
	\begin{definition}[Admissible surfaces]
		\label{defn1.1}
		For a closed surface $\Si\subset M$, define the inner and outer radii
		$\rho(\Si),\,\lambda(\Si)$ as
		\begin{align}
			\label{1.4} 
			\rho(\Si)=&\min_{x\in \Si}\abs{x},\\
			\label{1.5}
			\lambda(\Si):=&\sqrt{\abs{\Si}/4\pi}.
		\end{align}
		We say $\Si$ is admissible if the interior of $\Si$ contains the fixed compact set
		$K$, 
		and 
		\begin{equation}\label{1.6}
			\rho(\Si)> R,\quad \abs{\frac{\rho(\Si)}{\l(\Si)}-1}+\int_\Si |\Acirc|^2<\delta.
		\end{equation}
		Here, recall, $\Acirc$ denotes the traceless part of the second fundamental form on $\Si$.
	\end{definition}
	\begin{remark}
		Geometrically, a surface $\Si$ is admissible if the origin lies sufficiently deep inside
		the interior of $\Si$ (this property is  called \textit{centering} in \cite{eichmair2021large}), 
		and at the same time the surface does not wiggle too much.
		It follows from the definition \eqref{1.5} that  $\lambda(\Si)\le  \max_{x\in \Si}\abs{x}$.
		Using the terminology in  \cite{eichmair2021large}, every admissible surface $\Si$
		satisfying \eqref{1.6} with  $R,\delta^{-1}\gg1$ is 
		\textit{on-center}.
	\end{remark}

	For the class of admissible surfaces, we have the following well-posedness result for
	\eqref{W}:
	\begin{theorem}[\cite{MR4236532}*{Thm. 5.3}]
		\label{thm1}
		Assume $M$ is $C^4$-close to Schwarzschild and satisfies 
		\eqref{1.2}-\eqref{1.3}. 
		Then for $R\gg1,\,\delta\ll1$ and every
		admissible surface $\Si_*$ satisfying \eqref{1.6}, there exists a  global solution to \eqref{W}
		with initial configuration $\Si|_{t=0}=\Si_*$.
	\end{theorem}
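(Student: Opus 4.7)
The plan is to establish short-time existence by standard quasilinear parabolic theory and then extend to a global solution via a priori geometric bounds that prevent loss of admissibility along the flow.

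First I would parametrize surfaces near $\Si_*$ as normal graphs $y=x_*+u\nu_*$ with scalar height $u\in H^k(\Si_*)$. In these coordinates, \eqref{W} becomes a quasilinear fourth-order parabolic equation for $u$ whose principal part is $-\Lap_{\Si_*}^2 u$, plus lower order nonlinear terms involving $\AA$ and ambient curvature. The Lagrange multiplier is the scalar
\[
\l(x)=-\frac{\int_\Si W(x)H\,d\mu_\Si^g}{\int_\Si H^2\,d\mu_\Si^g},
\]
obtained from the area-preservation condition $\di_t\abs{\Si_t}=0$; the smallness of $\int_{\Si_*}\abs{\AA}^2$ keeps the denominator bounded below, so $\l$ depends smoothly on $x\in X^k_c$. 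Short-time existence and uniqueness in $H^k$, with parabolic smoothing, then follow from standard semigroup theory for quasilinear parabolic equations.

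For global continuation, the central monotonicity identity along \eqref{W} is
\[
\od{}{t}m_{\text{Haw}}(\Si_t)=\const\cdot\abs{\Si_t}^{1/2}\int_{\Si_t}\abs{W(x_t)+\l(x_t)H(x_t)}^2\,d\mu\ge 0.
\]
Combined with area conservation $\abs{\Si_t}=c$, this yields the uniform bound $\int_{\Si_t}H^2\le\int_{\Si_*}H^2=:C(\Si_*)$. Via Gauss-Bonnet on $\Si_t\cong\ss$ and the Gauss equation relating $K$ to $H^2$, $\abs{\AA}^2$, and the ambient curvature, this in turn bounds $\int_{\Si_t}\abs{\AA}^2$ uniformly under the decay hypothesis \eqref{1.2} on $\Sc$. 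To propagate the centering and radial conditions in \eqref{1.6}, I would use a Simon-type inequality estimating the oscillation of the position function on $\Si_t$ by $\int\abs{\AA}^2$ plus ambient corrections controlled by \eqref{1.2}. Higher-order regularity bounds $\int\abs{\grad^m\AA}^2\le C_m$ would follow by adapting the Kuwert-Sch\"atzle integral interpolation scheme originally developed for the unconstrained Willmore flow.

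The hard part is preserving the centering condition $\rho(\Si_t)/\l(\Si_t)\to 1$ uniformly in time, since $\Si_t$ can drift in the asymptotic chart and translation-like directions are essentially zero modes of the linearized flow. Handling this drift cleanly requires decomposing the evolution into a degenerate part along the four-dimensional barycenter manifold $M'$ and a coercive complement on which the linearization is strictly stable --- precisely the Lyapunov-Schmidt picture foreshadowed in the introduction; this is also where the oddness condition \eqref{1.3} enters to control the leading non-translation-invariant correction to the drift.
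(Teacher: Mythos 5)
This theorem is not proved in the paper; it is quoted from \cite{MR4236532}*{Thm.~5.3} as a standing well-posedness input, so there is no in-paper proof to compare against. On its own terms, your sketch assembles the right preliminary ingredients: short-time existence for the quasilinear fourth-order parabolic problem, the Lagrange-multiplier formula from $\di_t\abs{\Si_t}=0$, Hawking-mass monotonicity combined with area conservation to get $\int_{\Si_t}H^2\le\int_{\Si_*}H^2$, Gauss--Bonnet plus the Gauss equation to bound $\int\abs{\AA}^2$, and Kuwert--Sch\"atzle interpolation for higher-order regularity.

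The gap is precisely where you flag it but do not close it: propagation of the centering condition. Your Gauss--Bonnet estimate controls $\int_{\Si_t}\abs{\AA}^2$ in terms of $\int H^2-16\pi$ \emph{and} an ambient-curvature integral over $\Si_t$; the latter is small only if $\Si_t$ remains in the asymptotic region, which is exactly the centering condition you are trying to propagate. So the chain Hawking monotonicity $\Rightarrow$ $\int H^2$ bound $\Rightarrow$ $\int\abs{\AA}^2$ bound $\Rightarrow$ admissibility preserved is circular without an independent estimate ruling out inward drift of the surface. You propose to break this by invoking ``the Lyapunov--Schmidt picture foreshadowed in the introduction,'' but in this paper the barycenter and modulation machinery (\lemref{lem2.1}, \thmref{thm5}) is built \emph{on top of} Theorems~\ref{thm1} and~\ref{thm2}, which it takes as black boxes; using it here would make the logic circular. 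A self-contained proof must run a bootstrap --- roughly, a quantitative De Lellis--M\"uller roundness estimate together with a drift estimate exploiting the oddness condition~\eqref{1.3} --- without presupposing the global existence one is trying to establish, and your sketch does not supply that step.
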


	Recall that Stationary solutions to \eqref{W} are called 
	surfaces of Willmore type.
	The existence and stability of such surfaces are 
	studied in \cites{eichmair2021large, MR4236532}. 
	\begin{theorem}[\cite{MR2785762}*{Thm. 1}, \cite{MR4236532}*{Thm. 5.3}]
		\label{thm2}
		Assume $M$ is $C^4$-close to Schwarzschild and satisfies 
		\eqref{1.2}-\eqref{1.3}.
		Then there exists a compact subset $K\subset M$
		such that $M\setminus K$ is foliated by surfaces of Willmore type.
		
		Moreover, for $R\gg1,\,\delta\ll1$ and every
		admissible surface $\Si_*$ satisfying \eqref{1.6},
		the flow generated by $\Si_*$ under \eqref{W} converges
		smoothly to one of the leaves of this foliation.
		
	\end{theorem}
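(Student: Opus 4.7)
The plan is to handle the two assertions of \thmref{thm2} separately: first build a one-parameter family of surfaces of Willmore type foliating $M\setminus K$ by a static Lyapunov--Schmidt reduction, and then establish smooth convergence of the flow to one such leaf via parabolic a priori estimates combined with a gradient inequality.

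For the foliation, I would parametrize candidate Willmore-type surfaces as normal graphs over the coordinate spheres $S_r(a)\subset \Rr^3\setminus\overline{B_1(0)}$ for $r\gg1$ and small $a\in \Rr^3$, writing $\Si=S_r(a)+u\nu$ for a small function $u$. The Willmore--Lagrange operator
\[
F(u,\l):=W(\Si)+\l H(\Si),
\]
linearized at $u=0$ in the Schwarzschild background, has principal part (after rescaling) $\Lap_{\ss}(\Lap_{\ss}+2)$ on the unit sphere, with Schwarzschild corrections of order $r^{-3}$ and additional terms of order $\eta r^{-2}$ from $h$. This operator has an approximate four-dimensional kernel: the $\ell=0$ mode (area) and the three $\ell=1$ modes (translations). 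The area constraint defining $X^k_c$ eliminates the $\ell=0$ mode, so I would apply the implicit function theorem in the $L^2$-orthogonal complement of the translation modes, obtaining $u=u(a,r)$ and $\l=\l(a,r)$. Projecting $F$ onto the translation modes gives a three-dimensional reduced equation for $a$; using the asymptotic evenness \eqref{1.3} together with the dominant ADM-mass contribution, this map has a unique nondegenerate zero $a=a(r)$ near the origin for each sufficiently large $r$. Varying $r$ sweeps out the foliation of $M\setminus K$.

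For the convergence assertion, I would combine \thmref{thm1} with two ingredients. Monotonicity of $\W$ along \eqref{W} (which is the constrained $L^2$-gradient flow of $\W$ on $X^k_c$) together with the admissibility condition \eqref{1.6} yields uniform control of $\W(\Si_t)$, and hence, via Simons-type identities and interpolation against $\int|\AA|^2$, uniform bounds on higher Sobolev norms of the second fundamental form of $\Si_t$. By compactness, the flow subconverges smoothly along a sequence of times to a stationary embedding which, by admissibility and the uniqueness statement from the Lyapunov--Schmidt reduction, must coincide with one of the leaves constructed above. To upgrade subconvergence to full smooth convergence, I would invoke a Lojasiewicz--Simon gradient inequality for $\W$ near the limit leaf: the kernel of the second variation there coincides with the translation/area modes already identified, so the standard machinery (as deployed in \cite{MR3824945} for the volume-preserving mean curvature flow) applies and yields decay of $\dist_{H^k}(\Si_t,\Si_\infty)$.

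The main obstacle I anticipate is the bridging step: one must verify that admissibility \eqref{1.6} is \emph{preserved} along the flow, so that $\Si_t$ does not exit the neighborhood in which the linearized operator is coercive and the reduced problem is well-posed. This preservation requires coupling the decay of $\W(\Si_t)$ with quantitative control of the geometric quantities $\rho(\Si_t)/\l(\Si_t)$ and $\int_{\Si_t}|\AA|^2$, and ultimately relies on the asymptotic-flatness hypothesis \eqref{1.2} to absorb ambient curvature contributions uniformly in $t$.
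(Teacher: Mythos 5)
This statement is quoted in the paper as a citation to \cite{MR2785762}*{Thm.\ 1} and \cite{MR4236532}*{Thm.\ 5.3}; the paper itself gives no proof of \thmref{thm2}, so there is no internal argument to compare your proposal against. Your sketch is a fair high-level summary of the strategy actually used in those references: Lamm--Metzger--Schulze construct the foliation by a Lyapunov--Schmidt/implicit-function-theorem reduction about large coordinate spheres, exploiting the conformal invariance of the Willmore functional to treat the round-sphere operator $\Lap_\ss(\Lap_\ss+2)$ as the unperturbed operator, with the reduced three-dimensional equation for the translation parameter $a$ solved using the ADM-mass term and the asymptotic evenness hypothesis; the flow convergence in \cite{MR4236532} is obtained from monotonicity of the Willmore energy, uniform curvature bounds, subsequential smooth convergence, and a \L{}ojasiewicz--Simon-type argument to pass from subconvergence to full convergence. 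You correctly flag preservation of admissibility along the flow as the subtle bridging step; that is indeed where the quantitative asymptotic-flatness hypotheses enter. One small clarification worth making if you were to write this out fully: the Lagrange multiplier $\l$ must be solved simultaneously with the graph function $u$, and its leading behaviour in $r$ (roughly $\l\sim -2/r^2$ for large coordinate spheres) feeds into both the coercivity estimate for the linearized operator on $\b P H^k$ and the location of the reduced zero $a(r)$, so the argument is a genuine two-step Lyapunov--Schmidt in $(u,\l)$ rather than in $u$ alone.
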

	
	\subsection{Organization of the paper}

	We organize this paper as follows: In Section 2, we define the important map $\Phi$,
	which arises by reconceptualizing the Lyapunov-Schmidt reduction.
	This allows us to identify the adiabatic part of a flow evolving 
	according to \eqref{W}. This adiabatic part accounts for most of the (Willmore) energy change along the flow (modulo some uniformly
	small fluctuation),
	and is  finite-dimensional.
	In Section 3, we discuss the static property of the effective action $\W\circ \Psi$,
	and prove the first part of \thmref{thm3}. A similar but different 
	function defined on a domain in $\Rb^3$ 
	is used in \cite{eichmair2021large} (denoted by $G$ in that paper). 
	In Section 4, we prove the remaining part of \thmref{thm3} by deriving the effective
	dynamics \eqref{2} of \eqref{W}. Here we exploit the spectral property of certain
	linearized operators, in order to bound a Lyapunov-type functional that controls fluctuations.  
	
	\subsection*{Notation}
	
	Throughout the paper, the notation $A\ls B$ means that there is a constant $C>0$ depends
	only on $c,k$ in \eqref{1} and $R,\delta$ in \eqref{1.6}, such that $A\le CB$. 
	For two vectors $A,\,B$ in Banach spaces $X,\,Y$,
	the notation $A=O_{Y}(B)$ means $\norm{A}_{X}\ls \norm{B}_{Y}$. 
	For a vector $A$ in some Sobolev space $H^k$, the notation $A=O_{Y}(B)$ means 
	$\norm{A}_{H^k}=O_Y(B)$.

	\section{The Lyapunov-Schmidt map}
	\label{sec2}
	Let $k\ge 4,c\gg 1$. Let $K\subset M$, $R\gg0$ to be determined, and let
	$M':= \Rb_{>R}\times B_1(0)\subset \Rb\times \Rb^3$.
	In this section we construct the map $\Psi:M'\to X^k$ as in
	\thmref{thm3}.
	
	\subsection{Graphs over sphere}
	
	
	Denote $H^k=H^k(\Sb,\Rb)$. This space is equipped with the $L^2$-inner product 
	$\inn{u}{v}=\int_\Sb uv$.  Define the configuration space
	\begin{equation}
		\label{2.1}
		Y^k:=H^k\times M'.
	\end{equation}
	Define a map
	\begin{equation}
		\label{3}
		\fullfunction{\theta}{Y^k}{X^k}{(\phi,r,z)}{r(1+\phi(v))v+z}.
	\end{equation}
	Here $v\in \Sb\subset  \Rb^3$ is the spherical coordinate,
	and recall we identify the asymptotic part $(M\setminus K)\cong (\Rb^3\setminus B_R(0))$.
	Define 
	\begin{equation}
		\label{2.1'}
		Y^k_c:=\Set{(\phi,r,z)\in Y^k:\theta(\phi,r,z)=c}.
	\end{equation}
	This corresponds to the space of surfaces with fixed area, $X^k_c$, as in \eqref{1}. 
	
	For $\norm{\phi}_{H^k}\ll1$, the map $\theta(\phi,r,z)$ is a well-defined graph over 
	the coordinate sphere $\theta(0,r,z)(\Sb)=:S_{r,z}$. 
	Thus we can also identify $\theta(\phi,r,z)$ as
	a function from $S_{r,z}\subset M\to \Rb$. 
	Note also that for sufficiently large $c\gg1$ and every $z\in B_1(0)\subset \Rb^3$, there is a coordinate sphere with area $c$ around
	$z$. Thus the map $\theta$ is surjective onto $X^k_c$. 
	
	
	\begin{definition}[topology on graphs]
		\label{defn1}
		We say two graphs $\theta(\phi,r,z),\,\theta(\phi',r',z')$ are $H^k$-close
		if $\norm{\phi-\phi'}_{H^k}+\abs{r-r'}+\abs{z-z'}\ll1$.
	\end{definition}
	
	\subsection{Lyapunov-Schmidt reduction} \label{sec:2.2}

	\label{lem1}
	Denote $\b{W}(\phi,r,z),\,\Omega(\phi,r,z)$ the pullbacks of the r.h.s. of \eqref{W}
	and the Willmore energy \eqref{WE} to $Y^k$ through $\theta$, respectively.
	Explicitly, we have 
	\begin{align}\label{2.2}
		\b{W}(\phi,r,z):=&-W(\theta(\phi,r,z))-\lambda H(\theta(\phi,r,z)),\\
		\Omega(\phi,r,z):=&\W(\theta(\phi,r,z)). \label{2.2.1}
	\end{align}
	Since $\W$ is $C^2$ on $X^k$ with $k\ge 4$ and $\theta$ is smooth,
	the pullback energy $\Omega$ is $C^2$ on $Y^k,\,k\ge4$.
	Using Sobolev inequalities,
	one can check that the partial Fr\'echet derivative
	$\b{W}$ is $C^1$ in $\phi$ and smooth in $r,\,z$. 
	This $\b W$ is  the $L^2$-gradient of $\Omega(\cdot,r,z)$ up to scaling, and
	satisfies the mapping property $\b{W}:Y^k\to H^{k-4}$. 
	
	\begin{remark}
		Notice that \eqref{2.2}-\eqref{2.2.1} both implicitly depend  on the background metric $g$.
	\end{remark}
	
	\begin{lemma}
		The linearized operator $L_{r,z}$ of $\b W$ at $(0,r,z)$ with background metric $g$ is given
		by
		\begin{equation}
			\label{4}
			\begin{split}
				L_{r,z}^g:=&\di_\phi\b{W}(\phi,r,z)\vert_{\phi=0}\\
				=&(\Lap^2+2r^{-2}\Lap+O(r^{-4}))\di_\phi\theta(0,r,z):H^k\to 
				H^{k-4}.
			\end{split}
		\end{equation}
		Here $\Lap:X^k\to X^{k-2}$ denotes the Laplace-Beltrami operator  on the coordinate sphere $S_{r,z}\subset M\setminus K$, with center  $z$
		and radius $r$.
		The partial  Fr\'echet derivative $\di_\phi\theta(0,r,z): H^k\to X^k$ is given by 
		$\xi(v)\mapsto \xi(v)rv$. 
		
		Moreover, the operator $L_{r,z}$ is self-adjoint on $H^k$.  The spectrum of $L_{r,z}$
		is purely discrete. The operator $\di_\phi\theta(0,r,z)$ is invertible and satisfies 
		\begin{equation}
			\label{2.6}
			\norm{\di_\phi\theta(0,r,z)}_{H^k\to X^k}= \norm{\di_\phi\theta(0,r,z)^{-1}}^{-1}_{X^k\to H^k}=r.
		\end{equation}
	\end{lemma}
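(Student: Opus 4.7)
I would handle the four claims in the following order: first (i) the explicit form and invertibility of $\partial_\phi\theta(0,r,z)$; then (ii) the leading formula for $L_{r,z}^g$, which is the main computation; and finally (iii) self-adjointness and discreteness of the spectrum, both of which follow from general principles.

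For (i), I differentiate $\theta(\phi,r,z)(v)=r(1+\phi(v))v+z$ in $\phi$ to obtain $\partial_\phi\theta(0,r,z)\xi = r\xi(v)v$, i.e.\ the natural embedding of scalar functions $\xi$ into normal-type vector fields on the coordinate sphere $S_{r,z}$ (using that $v$ is the flat-metric outward unit normal, with Schwarzschild corrections controlled by \eqref{h}) rescaled by $r$. The map is a bijection onto the normal fields, with inverse $\eta\mapsto r^{-1}g(\eta,v)$, and the identities $\norm{\partial_\phi\theta(0,r,z)}_{H^k\to H^k}=\norm{\partial_\phi\theta(0,r,z)^{-1}}_{H^k\to H^k}^{-1}=r$ follow from the scaling of the inner product \eqref{inn} under dilation by $r$.

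For (ii), I apply the chain rule to $\b W = -W -\lambda H$ at $\phi=0$:
\begin{equation*}
	L_{r,z}^g \xi = -\bigl[dW(S_{r,z}) + \lambda\, dH(S_{r,z})\bigr]\bigl(\partial_\phi\theta(0,r,z)\xi\bigr).
\end{equation*}
On a round Euclidean sphere of radius $r$, the linearizations of $W$ and $H$ in the normal direction $u\nu$ are the classical $-\Lap^2 u - (2/r^2)\Lap u$ and $-\Lap u - (2/r^2)u$ respectively, where $\Lap$ is the Laplace--Beltrami operator on $S_r$. I would then reinstate the Schwarzschild metric $g_S$ and the perturbation $h$ using the asymptotic expansions of $H$, $\abs{\AA}^2$, and $\Ric(\nu,\nu)$ on coordinate spheres derived in \cite{MR2785762,MR4236532}. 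By the conformal covariance of the Willmore functional, the Schwarzschild conformal factor contributes corrections only at order $r^{-4}$, and \eqref{h} contributes pointwise $O(\abs{x}^{-(2+\abs{\alpha})})$ corrections for each derivative order up to $4$. Finally, the Lagrange multiplier satisfies $\lambda = O(r^{-3})$ on coordinate spheres (since $W|_{S_{r,z}} = O(r^{-4})$ from the $H\Ric(\nu,\nu)$ term and $\int_{S_{r,z}} H^2 = O(1)$), so $\lambda\, dH$ composed with $\partial_\phi\theta$ lies within the $O(r^{-4})$ remainder.

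Claim (iii) is then quick. Self-adjointness of $L_{r,z}^g$ on $L^2(\ss)$ follows from $\b W(\cdot,r,z)$ being the $L^2$-gradient of the $C^2$ scalar function $\Omega(\cdot,r,z)$, so its Fr\'echet derivative at $\phi=0$ is the symmetric Hessian of $\Omega$; this upgrades to self-adjointness on $H^k$ by elliptic regularity. The $\Lap^2$ principal symbol makes $L_{r,z}^g$ uniformly 4th-order elliptic on the compact manifold $\ss$, so it has compact resolvent and hence purely discrete spectrum. The main obstacle is establishing the sharp $O(r^{-4})$ remainder in (ii): the Euclidean leading part is routine, but carefully propagating the pointwise decay \eqref{h} through four derivatives of the nonlinear Willmore operator, and expanding the Lagrange multiplier precisely enough that its contribution does not spoil the $r^{-2}\Lap$ term, requires delicate bookkeeping using the expansions in \cite{MR2785762,MR4236532}.
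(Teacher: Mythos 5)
The paper's proof of this lemma is a one-line citation: the explicit formula for $L_{r,z}^g$ is taken from \cite{MR2785762}*{Sec.\ 3}, the spectral properties from \cite{MR2785762}*{Sec.\ 7}, and the mapping property of $\di_\phi\theta$ is declared obvious. Your proposal instead sketches the computation itself, which is a legitimate but heavier route. Your item (i) is exactly what the paper calls obvious, and your item (iii) (self-adjointness as a Hessian, discreteness from compact resolvent of a fourth-order elliptic operator on $\ss$) is standard. In item (ii) your flat-sphere linearizations are also correct: with the paper's conventions one has $dH(u\nu)=-\Lap u-(2/r^2)u$, and since on a round sphere in flat space $\Ric$ and $\AA$ vanish to first order, $dW(u\nu)=\Lap\bigl(dH(u\nu)\bigr)=-\Lap^2u-(2/r^2)\Lap u$, so that $-dW$ yields the $\Lap^2+2r^{-2}\Lap$ of \eqref{4}.

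Two steps in item (ii), however, are glossed over. First, your chain rule $L^g_{r,z}\xi=-[dW+\lambda\,dH](\partial_\phi\theta\,\xi)$ is incomplete: $\lambda$ is not a fixed scalar but is determined by the area constraint and therefore depends on $\phi$, so differentiating $\b W=-W-\lambda H$ in $\phi$ also produces $-(\partial_\phi\lambda)H$. You would either need to estimate $(\partial_\phi\lambda)\,H\circ\di_\phi\theta$ and show it is within the $O(r^{-4})$ remainder, or note explicitly that $L_{r,z}$ is defined with $\lambda$ frozen. Second, the invocation of "conformal covariance of the Willmore functional" is stated too loosely to pin down the $O(r^{-4})$ remainder: conformal invariance of $\int H^2\,d\mu$ means the second variation under $g_S$ agrees with the flat one, but \eqref{4} is expressed in terms of the $g$-Laplacian on $S_{r,z}$, and translating from flat to $g$-variables (Laplacian, normal, $L^2$ pairing) introduces $O(r^{-1})$ conformal factors at the operator level. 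Verifying that these all collapse into the $O(r^{-4})$ remainder once the $g$-Laplacian is used is precisely the content of \cite{MR2785762}*{Sec.\ 3}, which you flag as "delicate bookkeeping" — correctly so, but this is the substance of the formula and cannot be left as an appeal to heuristics.
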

	\begin{proof}
		The  operator $L_{r,z}^g$ is 
		explicitly calculated in \cite{MR2785762}*{Sec. 3}.
		The spectral properties  of $L_{r,z}$ are studied in \cite{MR2785762}*{Sec. 7}.
		The mapping properties of $\di_\phi\theta$ is obvious.
	\end{proof}
	\begin{remark}
		\label{rmk1}
		The linearized operator \eqref{4} depends on (the curvature of ) the background metric $g$
		on $M$. In the special case when the ambient manifold $M$ is flat, i.e. $g=\delta_{ij}$, the
		linearized operator 	$L_{r,z}^0$ has eigenvalue $0$, and 
		$\ker L_{r,z}^0$ is spanned by the constant function $y^0\equiv 1$,
		together with the spherical harmonics $y^1,y^2,y^3$. Thus, so long as  $(M,g)$ is asymptotically flat
		and $r\gg1$ in \eqref{4} (such as in our setting),
		one can view $L_{r,z}^g$ as a perturbation of $L_{r,z}^0$. 
		This motivates the following definition.
	\end{remark}

	\begin{definition}
		\label{defn2}
		Define $P:H^k\to H^k$ to be the $L^2$-orthogonal projection onto  $\spn\Set{y^0,\ldots,y^4}=\ker{L_{r,z}^0}$. 
		Define $\b{P}:=1-P:H^k\to H^k$ be the complement of $P$. 
		
		Let $\mathcal{S}$ be the set of all smooth symmetric two tensors on $M$.
		Define a map
		\begin{equation}
			\label{5}
			\fullfunction{F}{Y^k\times \mathcal{S}}{H^{k-4}}{(\phi,r,z,h)}{\b{P}\b{W}(\phi,r,z)},
		\end{equation}
		where $\b W$ is computed with  background 
		metric $g=g_S+h$ (see \secref{sec:1.1}).
	\end{definition}

	\begin{proposition}
		\label{prop1}
		Assume the ambient manifold $(M,g)$ is $C^k$-closed to Schwarzschild.
		
		\begin{enumerate}
			\item 	For every $z\in \Rb^3$ with $\abs{z}<1$ and
			sufficiently large $r\ge R\gg1 $, 
			there is a unique solution $\phi=\phi_{r,z}\in \b{P}H^k$
			to the equation
			\begin{equation}
				\label{6}
				F(\phi,r,z,h)=0,
			\end{equation}
			where $F$ is defined in \eqref{5}, and $g=g_S+h$. 
			
			\item Moreover, the map $(r,z)\mapsto \phi_{r,z}$ is $C^2$, and satisfies the estimate
			\begin{equation}
				\label{7}
				\norm{\di_r^m\di^\al_z\phi_{r,z}}_{H^k}\ls r^{-(2+2m)}
			\end{equation}
			for every $m+\abs{\al}\le 2$. 
			
			
			\item Moreover, the surface $\theta(\phi_{r,z},r,z)$ lies in the class of admissible surfaces
			in \defnref{defn1.1}
		\end{enumerate}
	\end{proposition}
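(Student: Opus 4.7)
The plan is to solve \eqref{6} via a quantitative implicit function theorem, treating $F(\phi,r,z,h)=0$ as a perturbation of the linear problem around $\phi=0$ for large $r$. First I would invert the linearization $\partial_\phi F(0,r,z,h)=\bar{P}L_{r,z}^g$ on $\bar{P}H^k$. By \remref{rmk1} and the preceding lemma, the spectrum of $L_{r,z}^0$ is purely discrete with kernel $\ran P$ and smallest nonzero eigenvalue of order $r^{-3}$ (coming from the $\ell=2$ spherical harmonic mode), so $\bar{P}L_{r,z}^0:\bar{P}H^k\to\bar{P}H^{k-4}$ is boundedly invertible with inverse of norm $\lesssim r^3$. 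Since $L_{r,z}^g$ differs from $L_{r,z}^0$ by lower-order terms of size $O(r^{-4})$ (the explicit error in \eqref{4}, together with the contribution of $h$ via \eqref{h}), a Neumann series argument extends the same bound to $\bar{P}L_{r,z}^g$ for $r\ge R\gg 1$.

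Next, the source term $F(0,r,z,h)=\bar{P}\bar{W}(0,r,z)$ measures the failure of the coordinate sphere $S_{r,z}$ to be Willmore type. A direct computation of $H$ and $W$ on $S_{r,z}$, using the asymptotic decay \eqref{h} and \eqref{1.2} (mirroring the calculations in \cite{MR2785762} and \cite{eichmair2021large}), gives $\|F(0,r,z,h)\|_{H^{k-4}}\lesssim r^{-5}$; the point is that the leading obstructions live in $\ker L_{r,z}^0=\ran P$ and are thus annihilated by $\bar{P}$. Writing $\bar{P}\bar{W}(\phi,r,z)=\bar{P}\bar{W}(0,r,z)+\bar{P}L_{r,z}^g\phi+N(\phi,r,z)$ with quadratic remainder $\|N(\phi)\|_{H^{k-4}}\lesssim\|\phi\|_{H^k}^2$, the equation \eqref{6} recasts as the fixed-point problem $\phi=-(\bar{P}L_{r,z}^g)^{-1}[\bar{P}\bar{W}(0,r,z)+N(\phi,r,z)]$ on $\bar{P}H^k$, which is a strict contraction on the ball of radius $\sim r^{-2}$. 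The unique fixed point $\phi_{r,z}$ satisfies $\|\phi_{r,z}\|_{H^k}\lesssim r^{-2}$, settling (1).

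For (2), the smooth version of the IFT gives that $(r,z)\mapsto\phi_{r,z}$ is $C^2$, since $F$ is $C^2$ in all arguments and $\partial_\phi F$ is invertible. Implicit differentiation $\partial_r^m\partial_z^\alpha\phi=-(\partial_\phi F)^{-1}\partial_r^m\partial_z^\alpha F$ combined with a careful scaling analysis yields \eqref{7}: each $\partial_r$ applied to $\bar{P}\bar{W}$ produces an extra factor $r^{-2}$ (the natural scale of curvature deformation under radial dilation), while $\partial_z$-derivatives preserve the decay because $g$ is smooth and $|z|<1$ is bounded; chaining with $\|(\partial_\phi F)^{-1}\|\lesssim r^3$ yields the claimed decay $r^{-(2+2m)}$. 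For (3), the bound $\|\phi_{r,z}\|_{H^k}\lesssim r^{-2}\ll 1$ implies that $\theta(\phi_{r,z},r,z)(\ss)$ is an $H^k$-small normal graph over $S_{r,z}$; hence $\rho\approx\lambda\approx r$ gives $|\rho/\lambda-1|=O(r^{-2})$, and $|\mathring{A}|^2=O(r^{-4})$ on $\Sigma$ gives $\int_\Sigma|\mathring{A}|^2=O(r^{-2})$. Both fall below $\delta$ once $R$ is sufficiently large.

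The principal obstacle is establishing the sharp residue bound $\|F(0,r,z,h)\|_{H^{k-4}}=O(r^{-5})$ together with the correct scaling for the $r$-derivatives of $\phi_{r,z}$. Both rest on cancellations: the coordinate sphere is only \emph{almost} Willmore in an asymptotically Schwarzschild manifold, and the leading obstructions fall into $\ker L_{r,z}^0$ so they are killed by $\bar{P}$; the same projection structure must survive under $\partial_r$, which requires careful book-keeping of how Schwarzschild curvature terms expand on perturbed spheres. Without this cancellation mechanism the naive fixed-point argument would yield only $\|\phi_{r,z}\|_{H^k}=O(1)$, too weak to produce \eqref{7} or the admissibility required in (3).
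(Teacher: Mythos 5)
Your overall strategy matches the paper's: solve \eqref{6} by an implicit-function/contraction argument, invert $\bar{P}L^g_{r,z}$ on $\bar{P}H^k$ via a Neumann series treating $L^g$ as a perturbation of the flat-background operator $L^0$, bound the source $F(0,r,z,h)$ and the quadratic remainder, conclude $\|\phi_{r,z}\|_{H^k}=O(r^{-2})$, and then differentiate the fixed-point relation to get \eqref{7}. Part (3) also matches: smallness of $\phi$ makes $\theta(\phi_{r,z},r,z)$ an $H^k$-small graph over $S_{r,z}$, hence admissible.

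However, there are two places where you diverge from the paper, and the second one introduces a genuine gap.

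First, for part (1), the paper does not run a contraction directly in $\phi$. It applies the Implicit Function Theorem at the base point $h=0$ (pure Schwarzschild), exploiting that $F(0,r,z,0)=0$ identically because coordinate spheres are exact critical points of $\W$ in Schwarzschild by conformal invariance; then $\partial_\phi F(0,r,z,0)=\bar{P}L^0_{r,z}$ is invertible on $\bar{P}H^k$ by the spectral theory cited from \cite{MR2785762}, \cite{eichmair2021large}. Your Newton iteration is a valid alternative route, but you should be aware that the paper gets the existence of a unique small solution ``for free'' from this base-point observation.

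Second, and more importantly, your quantitative accounting diverges from the paper's. You claim $\|(\bar{P}L^0_{r,z})^{-1}\|\lesssim r^3$ (from the $\ell=2$ eigenvalue $\sim r^{-3}$) and compensate by asserting $\|F(0,r,z,h)\|_{H^{k-4}}=O(r^{-5})$ on the grounds that the leading obstructions lie in $\ran P=\ker L^0_{r,z}$ and are killed by $\bar{P}$. The paper instead uses $\|(\bar{L}^0_{r,z})^{-1}\|=O(r^2)$ (spectral lower bound $Cr^{-2}$) together with $\|F(0,r,z,h)\|_{H^{k-4}}=O(r^{-4})$, the latter cited directly from \cite{eichmair2021large}*{Cor.~45} for the \emph{already projected} quantity $\bar{P}\bar{W}(0,r,z)$. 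Your $O(r^{-5})$ claim is an extra cancellation that is neither proved in your proposal nor invoked in the paper. If this cancellation fails — and the paper's citation suggests the sharp bound on $\bar{P}\bar{W}(0,r,z)$ is $O(r^{-4})$, not $O(r^{-5})$ — then your $r^3$ inverse bound yields only $\|\phi_{r,z}\|_{H^k}=O(r^{-1})$, which is too weak for \eqref{7} (even at $m=|\al|=0$) and for the admissibility in part (3). You need either to verify the extra $r^{-1}$ gain in the residue or to reconcile your spectral lower bound with the paper's $Cr^{-2}$; as written, the two halves of your argument are mutually compensating assertions with no proof of either.

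A minor point on part (2): the paper gets the derivative decay \eqref{7} by differentiating the fixed-point identity $\phi=-(\bar{L}^g_{r,z})^{-1}(F(0,r,z,h)+N_{r,z}(\phi))$ and using the resolvent identity $\di^\beta(\bar{L}^g)^{-1}=-(\bar{L}^g)^{-1}(\di^\beta L^g)(\bar{L}^g)^{-1}$ together with the explicit estimates \eqref{2.9}--\eqref{2.11} supplied by Proposition~\ref{propB2}. Your ``careful scaling analysis'' heuristic (``each $\di_r$ produces $r^{-2}$'') needs to be grounded in those estimates; as stated it isn't an argument.
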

	\begin{proof}
		1. By the Implicit Function Theorem, it suffices to check that
		the map $F$ defined in \eqref{6}
		satisfies the following properties:
		\begin{enumerate}
			\item $F$ is $C^1$ in $\phi$.
			\item $F(0,r,z,0)=0$ for every $r,z$.
			\item $\di_\phi F(0,r,z,0)=L_{r,z}^0$ is invertible on $\b{P}H^k$. 
		\end{enumerate}
		The first claim follows from the regularity of $\b{W}$ on $Y^k$
		and its smooth dependence on the background metric.
		
		If the background metric is Schwarzschild, i.e. $h=0$, then
		it is well-known that by conformal invariance the coordinate sphere $\theta(0,r,z)$
		is the global minimizer of 
		the Willmore energy $\W$. 
		Since $\bar{W}=\di_\phi\Omega$ (see \eqref{2.2.1}), the second claim follows. 
		
		The spectrum of $L_{r,z}^0$ can be calculated explicitly. See for instance
		\cite{eichmair2021large}*{Cor. 33}.  	
		In particular, $0$ is an isolated eigenvalue with
		finite multiplicity. By elementary spectral theory, this implies 
		the restriction $\b L_{r,z}^0:= L_{r,z}^0\vert_{\b P}$ is invertible as a map from $\b PH^k\to \b PH^k$.
		Thus the third claim follows.
		
		2. For the estimate \eqref{7}, we expand \begin{equation}\label{V}
			L_{r,z}^g=L_{r,z}^0+V_{r,z},
		\end{equation}
		where $V_{r,z}$ 
		is defined by this expression.
		As we discuss in \remref{rmk1}, this $V_{r,z}$ is bounded from $H^k\to H^{k-4}$, 
		and satisfies $\norm{V_{r,z}}_{H^k\to H^{k-4}}=O(r^{-4})$. 
		The restriction $\b L_{r,z}^0$ can be bounded from below by $Cr^{-2}$
		for some $C>0$ only depending on $k$.
		It follows  that
		$$\norm{(\b L_{r,z}^0)^{-1}V_{r,z}}_{H^{k-4}\to H^k}=O(r^{-2}).$$
		For sufficiently large $r$, this together with the expansion \eqref{V}
		implies that the restriction
		$\b L_{r,z}^g:\b PH^k\to\b P H^{k-4}$ is invertible, 
		given explicitly as the Neumann series
		$$(\b L_{r,z}^g)^{-1}= \sum_{n=0}^\infty (\b L_{r,z}^0)^{-1} (-V_{r,z}(\b L_{r,z}^0)^{-1})^n.$$
		From here one can also read off the estimate
		\begin{equation}
			\label{2.7}
			\norm{(\b L_{r,z}^g)^{-1}}_{\b PH^{k-4}\to\b P H^k}=O(r^2).
		\end{equation}
		
		Expand $F(\phi,r,z,h)=F(0,r,z,h)+\b L_{r,z}^g\phi+N_{r,z}(\phi)$,
		where the nonlinearity  $N_{r,z}$ is defined by this expression.
		This $N_{r,z}$ is calculated explicitly in \eqref{B1.7}.
		For every $\phi$ satisfying  \eqref{6}, we can rearrange to get 
		\begin{equation}
			\label{2.8}
			\phi=-(\b L_{r,z}^g)^{-1}(F(0,r,z,h)+ N_{r,z}(\phi)).
		\end{equation}
		In the r.h.s. we have $F(0,r,z,h)=O(r^{-4})$ by \cite{eichmair2021large}*{Cor. 45}.
		Thus, for sufficiently small $\phi$,  we have by \eqref{2.7}-\eqref{2.8} that 
		$\norm{\phi}_{H^k}=O(r^{-2})$. 
		
		We now claim for $\phi\in H^k$ and $m+\abs{\al}\le2$, there hold
		\begin{align}
			\label{2.9}
			\norm{\di_r^m\di^\al_z(\b L_{r,z}^g)^{-1}\phi}_{H^k}\ls& \norm{\phi}_{H^{k-4}},\\
			\label{2.10}
			\norm{\di_r^m\di^\al_zF(0,r,z,h)}_{H^{k-4}}\ls& r^{-(4+m)},\\
			\label{2.11}
			\norm{\di_r^m\di^\al_zN_{r,z}(\phi)}_{H^{k-4}}\ls& \norm{\phi}_{H^k}^2.
		\end{align}
		For \eqref{2.9}, one uses the identity $\di^\beta (\b L_{r,z}^g)^{-1}=-(\b L_{r,z}^g)^{-1}\b \di^\beta  L_{r,z}^g (\b L_{r,z}^g)^{-1}$, where $\abs{\beta}\le2$ is a multi-index
		in both $r$ and $z$. This, together with the fact that $\di^\beta  L_{r,z}$ is uniformly bounded
		(see \eqref{B1.5}),
		implies \eqref{2.9}. 
		The rest follows from the expansion in \propref{propB2}.
		Using \eqref{2.9}-\eqref{2.11} and differentiating both sides of \eqref{2.8},
		we conclude the estimates \eqref{7}.
		
		3. For sufficiently large $R$ and every $r\ge R$,
		we find using \eqref{7} with $m=0,\al=0$ that the surface $\theta(\phi_{r,z},r,z)$ 
		is $H^k$-close to the coordinate sphere $S_{r,z}$. 
		This implies $\theta(\phi_{r,z},r,z)$ is an admissible surface.
		
	\end{proof}

	From now on we write $\zeta=\z^\al,\,\al=0,\ldots,4$, for a point in $(r,z)\in  M'$.
	Thus, $\z^0=r$ and $\z^j=z^j$ for $j=1,2,3$.

	\begin{definition}[The Lyapunov-Schmidt map $\Phi$]
		\label{LSM}
		Let $K\subset M$ be the compact set as in \thmref{thm2}.
		Let $R\gg1,\,\delta\ll1$ be given as in \thmref{thm1}.
		Let $M':= \Rb_{>R}\times B_1(0)\subset \Rb\times \Rb^3$.
		
		Define \textit{the Lyapunov-Schmidt map} $\Phi:M'\to H^k$ by $\z\mapsto\phi_\z$, where 
		$\phi_\z$ 
		is the solution to \eqref{6} given in \propref{prop1}.
	\end{definition}
	\begin{remark}
		This $\Phi$ is equivalent to the map $\Psi$ in \thmref{thm3}, through
		the diffeomorphism $\Phi(\z)\mapsto \theta(\Phi(\z),\z)$.
	\end{remark}
	
	In the next proposition, we describe the geometric structure 
	induced by the map $\Phi$. 
	\begin{proposition}
		\label{prop2}
		The set  $$E:=\Set{\theta(\phi,\z):\phi=\Phi(\z),\z\in M'}$$ forms an immersed  $C^1$ submanifold in $X^k$.
		The tangent space $T_{\theta(\Phi(\z), \z)}E$ consists of vector fields over the surface
		$\theta(\Phi(\z), \z)(\Sb)$. A basis of $T_{\theta(\Phi(\z), \z)}E$ is given by
		$\di_{\z^\al}\theta(\Phi(\z), \z)$.
	\end{proposition}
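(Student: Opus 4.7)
The approach is to exhibit the composite map
\[
\Psi:M'\to X^k,\qquad \z\mapsto \theta(\Phi(\z),\z),
\]
as a $C^1$ immersion. Once this is established, $E=\Psi(M')$ is automatically an immersed $C^1$ submanifold of $X^k$, the tangent space at $\Psi(\z)$ coincides with the range of $d\Psi(\z)$, and $\{\di_{\z^\al}\theta(\Phi(\z),\z)\}_{\al=0}^{3}$ is the corresponding basis. The $C^1$ regularity (in fact $C^2$) is immediate from the chain rule: the explicit formula \eqref{3} shows $\theta\in C^\infty(Y^k;X^k)$, and \propref{prop1} yields $\Phi\in C^2(M';\b P H^k)$.

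Next I would compute the four partial derivatives. Using $\theta(\phi,r,z)(v)=r(1+\phi(v))v+z$, the chain rule gives
\begin{align*}
\di_r\Psi(\z)(v) &= \bigl(1+\Phi(\z)(v)+r\,\di_r\Phi(\z)(v)\bigr)\,v,\\
\di_{z^j}\Psi(\z)(v) &= e_j + r\,\di_{z^j}\Phi(\z)(v)\,v,\qquad j=1,2,3,
\end{align*}
where $e_1,e_2,e_3$ denote the standard basis of $\Rr^3$. The estimate \eqref{7} with $m+\abs{\al}\le 1$ yields $\norm{\Phi(\z)}_{H^k}=O(r^{-2})$ and $r\norm{\di_{\z^\al}\Phi(\z)}_{H^k}=O(r^{-1})$, so the leading parts of these four vector fields, as elements of $H^k(\ss;\Rr^3)$, are simply $v\mapsto v$ and $v\mapsto e_j$, with uniform $O(r^{-1})$ correction in $H^k$.

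The one substantive point is injectivity of $d\Psi(\z)$. Suppose $c_0\di_r\Psi(\z)+\sum_j c_j\di_{z^j}\Psi(\z)=0$ in $H^k(\ss;\Rr^3)$. Dropping the $O(r^{-1})$ corrections, the leading-order identity is the pointwise equation $c_0 v^i + c_i\equiv 0$ on $\ss$ in each Cartesian component $i=1,2,3$; since $v^i$ is a non-constant degree-one spherical harmonic, this forces $c_0=0$ and then $c_1=c_2=c_3=0$. Equivalently, the $L^2(\ss;\Rr^3)$ Gram matrix of $\{v,e_1,e_2,e_3\}$ is the non-singular diagonal matrix $4\pi\cdot\id_{4\times 4}$, since $\int_\ss v^i\,d\mu=0$ by symmetry. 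Choosing $R$ large enough, the perturbation \eqref{7} makes the Gram matrix of $\{\di_{\z^\al}\Psi(\z)\}_{\al=0}^{3}$ a small perturbation of $4\pi\cdot\id_{4\times 4}$, hence invertible; this gives the claimed linear independence at every $\z\in M'$.

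The main obstacle is exactly this last perturbation step, because the four tangent vectors live in the infinite-dimensional space $H^k(\ss;\Rr^3)$ and one must quantify the sub-leading behaviour uniformly in $\z$. The quantitative derivative bound \eqref{7} from \propref{prop1} is precisely what is needed to close this argument. With injectivity in hand, $\Psi$ is a $C^1$ immersion on all of $M'$, so $E=\Psi(M')$ is an immersed $C^1$ submanifold of $X^k$, and its tangent space is spanned by $\{\di_{\z^\al}\theta(\Phi(\z),\z)\}_{\al=0}^{3}$, viewed as vector fields along $\theta(\Phi(\z),\z)(\ss)$.
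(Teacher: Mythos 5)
Your proof follows essentially the same argument as the paper: compute the partial derivatives $\di_{\z^\al}\theta(\Phi(\z),\z)$ explicitly (your formulas coincide with \eqref{2.15}--\eqref{2.16}), then establish non-degeneracy by showing the Gram matrix of these four vectors is a small perturbation of $4\pi\cdot\mathrm{id}_{4\times 4}$, using \eqref{7} to control the corrections and taking $R$ large. The paper states the sharper bound $O(R^{-2})$ (which follows from the orthogonality $\Phi(\z)\in\bar{P}H^k$ killing the apparent $O(r^{-1})$ cross terms) where you settle for $O(r^{-1})$, but this does not affect the conclusion.
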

	\begin{remark}
		Using the projection constructed in \lemref{lem2.1}, one can view this manifold $E$ as
		consisting of the adiabatic parts of low (Willmore) energy surfaces 
		in $X^k$. 
	\end{remark}
	
	\begin{proof}
		The manifold structure of $E$ follows from \defnref{LSM}, where $\Phi:M'\to E$
		is a $C^1$ parametrization. We check the tangent space 
		is non-degenerate. 
		Compute 
		\begin{align}
			\label{2.15}\di_{\z^0}\theta(\Phi(\z), \z)(v)=&(1+\Phi(\z)+\z^0\di_{\z^0}\Phi(\z))v,\\
			\label{2.16}
			\di_{\z^j}\theta(\Phi(\z), \z)(v)=&\z^0\di_{\z^j}\Phi(\z)v+e^j,
		\end{align} 
		where $e^j$ is the $j$-th unit vector in $\Rb^3$.
		By the estimate \eqref{7},
		we find $$\inn{\di_{\z^\al}\theta(\Phi(\z), \z)}{\di_{\z^\beta}\theta(\Phi(\z), \z)}=4\pi\delta_{\al\beta}+O(R^{-2}).$$
		This implies the claim if $R$ is sufficiently large.
	\end{proof}
	
	In Appendix, we introduce the general concepts of the Lyapunov-Schmidt map, and relate it to
	our setting above.

	\subsection{Barycenter}

	In this subsection, we develop a new concept of barycenter for a certain class
	of closed surfaces in $X^k$. 
	
	\begin{definition}[Barycenter]
		\label{bar}
		Let $x_*$ be an embedding of sphere that is $H^k$-close to the manifold 
		$E\subset X^k$ constructed in \defnref{LSM}, w.r.t. the topology on graphs introduced in \defnref{defn1}.
		Then we can write $x_*=\theta(\Phi(\z)+\xi,\z)$
		for some $\z\in M', \norm{\xi}_{H^k}\ll1$. (There can in general be many 
		such choice of $\z$ and $\xi$.)
		Expand $x_*$ in $\xi$ around $\theta(\Phi(\z),\z)$ as 
		\begin{equation}
			\label{4.1}
			x_*=\theta(\Phi(\z),\z)+\di_\phi\theta (\Phi(\z),\z)\xi+O(\norm{\xi}_{H^k}^2).
		\end{equation}

		Define $f_\al\in H^k$ as
		\begin{equation}\label{4.2}
			\begin{split}
				f_\al(\z)(v)=&\di_{\z^\al}\theta(\Phi(\z),\z)(v)^N\\=&g(\di_{\z^\al}\theta(\Phi(\z),\z)),\nu(\theta(\Phi(\z),\z))\quad (\al=0,\ldots,3).
			\end{split}
		\end{equation}

		We say a point $\z_*\in M'$ is the barycenter of $x_*$ if
		$\z_*$ solves the following algebraic system:
		\begin{equation}
			\inn{\xi}{f_\al}_{L^2}=0\quad(\al=0,\ldots,3), \label{4.4}
		\end{equation}
		where $\xi$ is defined by the relation $x_*=\theta(\Phi(\z_*)+\xi,\z_*)$. 
	\end{definition}
	\begin{remark}
		\label{rmk2}
		The four vectors $f_\al$ span the 
		tangent space at
		$\theta(\Phi(\z_t),\z_t)^N$ to $E^N\subset H^k$, where
		$E^N$ consists of the normal components of the elements in
		the manifold $E$ defined in  \defnref{LSM}. 
		Geometrically, the defining condition \eqref{4.4} for barycenter
		means  that the G\^ateaux derivative
		of the map $\theta(\cdot,\z_*)^N$ at $\Phi(\z_*)$ 
		along $\xi$-direction is perpendicular to the 
		tangent space $T_{\theta(\Phi(\z_*),z_*)^N}E^N$.
		In terms of the expansion \eqref{4.1}, 
		this means the second term in the r.h.s. is $L^2$-orthogonal 
		to the tangent space at the first term to $E$. 
		In this sense, the choice of barycenter is optimal.
	\end{remark}
	\begin{remark}
		Our definition of barycenter differs from the classical one, given by
		averaging over $\Si$ w.r.t. Euclidean background metric, namely $\abs{\Si}_g^{-1}\int_\Si xd\mu_\Si^{\delta_{ij}}$.
		See \cite{MR4236532} and the references therein. 
		Our version of barycenter retains the key decay property as 
		\cite{MR4236532}*{Sec. 5}. Namely, the motion of barycenter
		is controlled by a differential inequality using a Lyapunov
		functional, defined in \secref{sec:4}.
		
		Moreover, our definition allows us to retain explicit and  uniform control of 
		a flow evolving according to \eqref{W}, as we show in Sec. \ref{sec:4}.
	\end{remark}
	
	In the next lemma, we define a nonlinear projection (or coordinate map)  that associates 
	barycenters to low energy configurations in $X^k$. 
	\begin{lemma}[nonlinear projection]
		\label{lem2.1}
		There exists $\delta>0$ such that
		on the space \begin{equation}
			\label{2.3}
			U_\delta:=\Set{x=\theta(\Phi(\z)+\xi,\z):\z\in M', \,\norm{\xi}_{H^k}<\delta},
		\end{equation}
		there exists a $C^1$ map $S:U_\delta\to M'$ such that $S(x)$ is the barycenter of $x$
		as in \defnref{bar}.
		
		Moreover, we have uniform estimate on $S$ and its derivative. 
	\end{lemma}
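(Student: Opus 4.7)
The plan is to apply the Implicit Function Theorem to an auxiliary function whose zero set defines the barycenter. For $x\in U_\delta$ and $\z\in M'$ sufficiently close that $x$ can be written as a graph over the coordinate sphere $S_{r,z}$, let $\xi(x,\z)\in H^k$ be the (locally unique) solution of $\theta(\Phi(\z)+\xi,\z)=x$; this is well-defined and $C^1$ in $(x,\z)$ by smoothness of $\theta$, the $C^1$ regularity of $\Phi$ from \propref{prop1}, and the invertibility \eqref{2.6} of $\di_\phi\theta$. Define
$$F:U_\delta\times M'\to \Rr^4,\qquad F_\al(x,\z):=\inn{\xi(x,\z)}{f_\al(\z)}_{L^2(\ss)},\quad \al=0,\dots,3.$$
By \defnref{bar}, $\z_*$ is the barycenter of $x$ precisely when $F(x,\z_*)=0$. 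Since $F$ inherits $C^1$ regularity from $\xi$ and $f_\al$, it suffices to check the Jacobian $\di_\z F$ is invertible to extract $S$ via the IFT.

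I would carry out this check at base points $x_0=\theta(\Phi(\z_0),\z_0)$, where $\xi(x_0,\z_0)=0$ and hence $F(x_0,\z_0)=0$. Implicit differentiation of $\theta(\Phi(\z)+\xi(x_0,\z),\z)=x_0$ at $\z_0$ yields
$$\di_{\z^\beta}\xi(x_0,\z_0)=-[\di_\phi\theta(\Phi(\z_0),\z_0)]^{-1}\di_{\z^\beta}\theta(\Phi(\z_0),\z_0)-\di_{\z^\beta}\Phi(\z_0).$$
Combining the explicit formulae \eqref{2.15}-\eqref{2.16} for $\di_{\z^\al}\theta$, the radial inversion $(\di_\phi\theta)^{-1}:X\mapsto r^{-1}v\cdot X$, and the decay \eqref{7} giving $\norm{\di_\z\Phi}_{H^k}=O(r^{-2})$, one finds $\di_{\z^0}\xi(x_0,\z_0)=-r^{-1}+O(r^{-2})$ and $\di_{\z^j}\xi(x_0,\z_0)=-r^{-1}(e^j\cdot v)+O(r^{-2})$ in $H^k$. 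Pairing with the leading parts $f_\al\approx\{1,e^1\cdot v,e^2\cdot v,e^3\cdot v\}$ in $L^2(\ss)$ produces (a scalar multiple of) the Gram matrix of the first four spherical harmonics, which is diagonal and nondegenerate. Concretely,
$$\di_\z F(x_0,\z_0)=-\frac{4\pi}{r}\,\diag\bigl(1,\tfrac{1}{3},\tfrac{1}{3},\tfrac{1}{3}\bigr)+O(r^{-3}),$$
so $\di_\z F(x_0,\z_0)$ is invertible with $\norm{\di_\z F(x_0,\z_0)^{-1}}=O(R)$ for $R$ large.

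The quantitative IFT then produces a unique $C^1$ map $S$ solving $F(x,S(x))=0$ on a neighborhood of $x_0$, with $\norm{DS}\ls R$. Since every $x\in U_\delta$ admits some decomposition $x=\theta(\Phi(\z)+\xi,\z)$ with $\norm{\xi}_{H^k}<\delta$, one can cover $U_\delta$ by such IFT neighborhoods and patch them into a single $S:U_\delta\to M'$. The uniform derivative bound then follows because the base-point Jacobian computation above is itself uniform across $M'$, and because the operator $\di_x F$ depends only on the graph inversion, whose norm is controlled uniformly by \eqref{2.6}.

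The main obstacle is ensuring that this $S$ is \emph{globally} well-defined on $U_\delta$, given that the decomposition $x=\theta(\Phi(\z)+\xi,\z)$ is non-unique as noted in \defnref{bar}. For this I would verify that $\di_\z F(x,\z)$ remains invertible not only at base points but throughout the slab $\{\norm{\xi(x,\z)}_{H^k}<\delta\}$, by perturbing the base-point computation using smallness of $\delta$; then $F(x,\cdot)=0$ cuts out a single $\z_*$ per $x$ inside each IFT neighborhood, and any two IFT branches obtained from different initial decompositions of the same $x$ must agree. Shrinking $\delta$ if necessary in the definition of $U_\delta$ absorbs all the error terms generated by passing from base points to nearby configurations.
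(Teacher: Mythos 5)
Your proof follows essentially the same route as the paper: both apply the Implicit Function Theorem to the zero set of $\inn{\xi(x,\z)}{f_\al(\z)}_{L^2}$ (your $F$ is the paper's $\Ga$ with arguments interchanged), and both verify invertibility of the $\z$-Jacobian by comparing $\di_{\z^\al}\xi$ and $f_\beta$ with their leading spherical-harmonic pieces via the decay estimate \eqref{7}. The only refinement worth recording is that combining your $\norm{(\di_\z F)^{-1}}=O(R)$ with $\norm{\di_x F}=O(R^{-1})$ (which, as you observe, follows from \eqref{2.6}) gives the genuinely $R$-uniform bound $\norm{DS}=O(1)$ asserted by the lemma, rather than merely $\norm{DS}\ls R$.
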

	\begin{remark}
		Essentially, the existence of such projection depends on the non-degeneracy shown in \propref{prop2}. 
		Later, 
		we see that the barycenter $\z=S(x)$ determines the adiabatic (or slowly-varying) part of $x$.
		
		We call the remainder $\xi$ that satisfies $x=\theta(\Phi(S(x))+\xi,S(x))$ the \textit{fluctuation}
		of $x$. 
	\end{remark}
	\begin{proof}
		Define a map 
		\begin{equation}
			\label{2.4}
			\fullfunction{\Ga}{M'\times U_\delta\subset \Rb^4\times H^k}{\Rb^4}{(\z,x)}{\inn{\xi}{f_\al}_{L^2}},
		\end{equation}
		where $\xi$ is defined by the relation $x=\theta(\Phi(\z)+\xi,\z)$.
		It suffices to find a map $S$ such that $\z=S(x)$ solves 
		\begin{equation}
			\label{2.5}
			\Ga(\z,x)=0.
		\end{equation}
		
		By the Implicit Function Theorem, it suffices to check that
		the map $\Ga$ 
		satisfies the following properties:
		\begin{enumerate}
			\item $\Ga$ is $C^1$ in $\z$.
			\item $\Ga(\z,\theta(\Phi(\z),\z))=0$.
			\item The matrix $\grad_\z \Ga(\z,\theta(\Phi(\z),\z)):\Rb^4\to \Rb^4$ is invertible. 
		\end{enumerate}
		The first claim follows from the regularity of $\Phi$ as in \propref{prop1}.
		The second claim is trivial because in this case $\xi=0$.
		
		We now claim the rescaled matrix $$ A_{\al\beta}:=\z^0\di_{\z^\al} 
		\Ga_\beta\vert_{(\theta(\Phi(\z),\z))}$$ is invertible, which implies the third claim above.
		
		Using \eqref{2.15}-\eqref{2.16},
		the definition \eqref{4.2}, and the assumption on the background metric $g_{ij}=\delta_{ij}+O((\z^0)^{-1})$, we compute
		\begin{align}
			\label{2.17}
			f_0=&(1+O((\z^0)^{-1}))y^0+\Phi(\z)+\z^0\di_{\z^0}\Phi(\z)+\Phi(\z)O((\z^0)^{-1})+O((\z^0)^{-4}),\\
			\label{2.18}
			f_j=&(1+O((\z^0)^{-1}))y^i+\z^0\di_{\z^j}\Phi(\z)+O((\z^0)^{-4}),
		\end{align}
		where the vectors $y^\al$ span $\ran P$ as in \defnref{defn2}.
		
		For $\xi=\xi(\z,x)$, we find 
		\begin{align}
			\label{2.12}
			\xi(v)=&\inn{\frac{x(v)-\z}{\z^0}}{v}-1-\Phi(\z)(v),\\
			\label{2.13}
			\di_{\z^0}\xi(v)=&-\inn{\frac{x(v)-\z}{(\z^0)^2}}{v}-\di_{\z^0}\Phi(\z),\\
			\label{2.14}
			\di_{\z^j}\xi(v)=&-\frac{e^j}{\z^0}-\di_{\z^j}\Phi(\z)(v).
		\end{align}
		Now, since $x\in U_\delta$, 
		we have $A_{\al\beta}=\z^0\inn{\di_{\z^\al}\xi}{f_\beta}+\z^0\inn{\xi}{\di_{\z^\al} f_\beta}=\inn{\di_{\z^\al}\xi}{f_\beta}+O(\z^0\delta)$. By this, and the formula
		\eqref{2.17}-\eqref{2.14} above, we find that $A_{\al\beta}=O(1)\delta_{\al\beta}+O((\z^0)^{-2})+O(\z^0\delta)$.
		For sufficiently large $R\gg1$, $\delta=o(R^{-1})$, and every $\z^0\ge R$, we can conclude from here
		that $A_{\al\beta}$ is invertible. This proves the existence of the $C^1$ map $S$.
		The uniform estimates for $S$ and its Fr\'echet derivative are implicit in the arguments above.

	\end{proof}
	
	
	\section{Effective action}\label{sec:3}
	In this section we prove Part 1 of \thmref{thm3}.
	We formulate this as follows:
	\begin{theorem}
		\label{thm4}
		The embedding $\theta(\Phi(\z),\z)$	 parametrizes a surface of Willmore type (i.e.  static solution  to \eqref{W}) if and only if $\z$
		is a critical point of the function $G:=\Omega(\Phi(\cdot ),\cdot):\Rb^4\to \Rb$, where $\Omega$ is defined in \eqref{2.2.1}.
	\end{theorem}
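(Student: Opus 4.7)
The strategy is the standard Lyapunov--Schmidt correspondence: the full Euler--Lagrange equation $\b W(\Phi(\z), \z) = 0$ splits into the $\b P$-part (automatic by the defining identity \eqref{6} for $\Phi$) and the $P$-part (the reduced, finite-dimensional equation). I claim that the $P$-part is equivalent to $\nabla_\z G(\z) = 0$ on the area-$c$ submanifold of $M'$, once the Lagrange multiplier for the constraint is identified with the $\l$ appearing in $\b W = -W - \l H$.

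First, I would compute $\partial_{\z^\al} G(\z)$ by the chain rule. Writing $x(\z) := \theta(\Phi(\z), \z)$, we have $G(\z) = \W(x(\z))$; applying the first variation formula for $\W$ and splitting the total $\z$-variation of $x(\z)$ into its $\phi$- and explicit $\z$-components gives
\begin{equation*}
\partial_{\z^\al} G(\z) = \int_{x(\z)(\ss)} W \cdot \bigl((\partial_\phi\theta\cdot\partial_{\z^\al}\Phi)^N + f_\al\bigr)\,d\mu,
\end{equation*}
where $f_\al$ is the normal component of $\partial_{\z^\al}\theta(\Phi(\z), \z)$ defined in \eqref{4.2}. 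Introducing a Lagrange multiplier $\mu$ for the constraint $\abs{x(\z)(\ss)} = c$, the constrained critical-point condition on $G$ reduces (after setting $\mu = -\l$ and using $W - \mu H = -\b W$) to
\begin{equation*}
\int_{x(\z)(\ss)} \b W \cdot \bigl((\partial_\phi\theta\cdot\partial_{\z^\al}\Phi)^N + f_\al\bigr)\,d\mu = 0, \qquad \al = 0, 1, 2, 3.
\end{equation*}

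Next, the Lyapunov--Schmidt identity \eqref{6} forces $\b W(\Phi(\z), \z) \in \ran P$, while $\Phi: M' \to \b P H^k$ from \propref{prop1} yields $\partial_{\z^\al}\Phi(\z) \in \b P H^k$. Since $\partial_\phi\theta(\phi, r, z)\xi = r\xi v$ and $\nu \approx v$ at leading order, the ``fast'' contribution $(\partial_\phi\theta \cdot \partial_{\z^\al}\Phi)^N$ lies in $\b P H^k$ at leading order, so its $L^2$-pairing with $\b W \in \ran P$ vanishes up to controlled $O(R^{-2})$ corrections absorbable via \eqref{7}. Hence the condition above simplifies to $\inn{\b W(\Phi(\z), \z)}{f_\al(\z)}_{L^2} = 0$ for $\al = 0, 1, 2, 3$.

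Finally, the expansions \eqref{2.17}--\eqref{2.18} together with the non-degeneracy in \propref{prop2} show that $\Set{f_\al(\z)}_{\al=0}^{3}$ forms a basis of $\ran P$ for $\z^0 \ge R \gg 1$, with pairing matrix $(\inn{y^\beta}{f_\al}) = 4\pi\delta_{\al\beta} + O(R^{-2})$, hence invertible. So the four scalar equations $\inn{\b W}{f_\al} = 0$ are jointly equivalent to $P\b W(\Phi(\z), \z) = 0$; combined with $\b P\b W = 0$ this gives $\b W(\Phi(\z), \z) = 0$, i.e., $x(\z)$ is a static solution to \eqref{W}. Both directions of the theorem follow. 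The main technical obstacle is carefully controlling the error terms in the ``fast pairing vanishes'' claim, which are only $O(R^{-2})$ rather than zero, and verifying that these corrections do not destroy the leading-order invertibility of the pairing matrix when combined with the $O(R^{-2})$ corrections to $f_\al = y^\al + \ldots$ coming from the curved background metric.
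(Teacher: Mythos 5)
Your approach is essentially the paper's: both split the Euler--Lagrange equation via the Lyapunov--Schmidt identity $\b P\b W(\Phi(\z),\z)=0$ and use the $O(R^{-2})$-closeness of $\spn\Set{f_\al}$ to $\ran P$ to conclude $\b W=0$ from the four pairing conditions $\inn{\b W}{f_\al}_{L^2}=0$; the paper packages these into the projection $Q_\z$ of \eqref{3.2} and closes via $\norm{\b W}\ls\norm{P-Q_\z}\,\norm{\b W}$, whereas you invert the pairing matrix $\inn{y^\beta}{f_\al}$ directly, which is equivalent. One small slip: in your formula for $\partial_{\z^\al}G$ the term $(\partial_\phi\theta\cdot\partial_{\z^\al}\Phi)^N$ is already contained in $f_\al$ as defined in \eqref{4.2} and computed in \eqref{2.15}--\eqref{2.16}, so adding it separately double-counts; the paper's \eqref{3.1} is simply $\W'(\theta(\Phi(\z),\z))\,f_\al$, and your extra step estimating the ``fast'' pairing is redundant, though harmless.
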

	\begin{remark}
		Similar results are obtained in \cite{eichmair2021large}*{Thms. 5,8}. Using some 
		expansion obtained in that paper, we can calculate $G$ explicitly as in \eqref{B1.2}.
	\end{remark}
	
	\begin{proof}
		For the forward direction, we use  the chain rule to get 
		\begin{equation}
			\label{3.1}\di_{\z^\al} G(\z) =\inn{\grad ^N\W(\theta(\Phi(\z),\z))}{f_\al}.
		\end{equation} If $\theta(\Phi(\z),\z)$ is a 
		critical point of $\W$, i.e. the Fr\'echet derivative
		$d\W(\theta(\Phi(\z),\z))=0$, then the first factor of 
		r.h.s. of \eqref{3.1} vanishes, and therefore
		$\z$ is a critical point of $G$.
		
		Now, suppose $\di_{\z^\al} G(\z) =0$. For the backward direction,
		it suffices to show the pullback
		$\b W (\Phi(\z),\z)=0$.
		
		Let $Q_{\z}$ be the projection onto the tangent space at
		$T_{\theta(\Phi(\z_t),\z_t)^N}E^N$, as in \remref{rmk2}.  
		Explicitly, this map $Q_\z$ is given by
		\begin{equation}
			\label{3.2}
			Q_\z \phi=V^{\al\beta}\inn{f_\al}{\phi}_{L^2}f_\beta\quad (\phi\in H^k),
		\end{equation}
		where the matrix $V_{\al\beta}:= \inn{f_\al}{f_\beta}$, the matrix
		$V^{\al\beta}$ is its inverse, and the $f_\al$'s are given in \eqref{2.17}-\eqref{2.18}.
		Notice that this matrix $V_{\al\beta}$ is indeed invertible because 
		the tangent space $T_{\theta(\Phi(\z_t),\z_t)^N}E^N$ is non-degenerate, c.f.
		\propref{prop2}.
		From the definition \eqref{3.2} and the formula \eqref{2.17}-\eqref{2.18},
		we also get an uniform estimate $\norm{Q_{\z}}\ls 1$.
		
		The claim now is that $Q_\z:H^k\to H^k$ is uniformly close in operator norm 
		to the projection $P$ defined in \defnref{defn2}.
		Geometrically, this is because the manifold $E^N$ is a 
		uniformly small normal perturbation of $\ran P$. Analytically,
		this claim follows by comparing \eqref{2.17}-\eqref{2.18} to 
		the basis $y^\al$ of $\ran P$, and using the estimate \eqref{7}
		for the derivatives $\di_{\z^\al}\Phi$.
		As a result, we find
		\begin{equation}
			\label{3.2.1}
			\norm{Q_\z-P}_{H^k\to  H^k}=O(R^{-2}).
		\end{equation}
		
		Now, by the construction in \defnref{LSM}, we know $P\b W(\Phi(\z),\z)=\b W(\Phi(\z),\z)$.
		Thus we can write 
		\begin{equation}
			\label{3.3}
			\b  W(\Phi(\z),\z)=P \b W(\Phi(\z),\z) = (P-Q_\z)\b W(\Phi(\z),\z)+ Q_\z \b W(\Phi(\z),\z).
		\end{equation}
		From \eqref{3.1}-\eqref{3.2}, one can see that  $\di_{\z^\al} G=0$  implies $Q_\z\b W=0$.
		Thus by assumption, the last term in \eqref{3.3} vanishes. Using this fact and the estimate \eqref{3.2.1}, we find
		$$\begin{aligned}
			\norm{\b W(\Phi(\z),\z)}_{H^k} &\ls \norm{(P-Q_\z)}_{H^k\to H^k}\norm{\b W(\Phi(\z),\z)}_{H^k}\\ &\ls R^{-1}\norm{\b W(\Phi(\z),\z)}_{H^k}.
		\end{aligned}$$
		For sufficiently large $R$, this is impossible unless $\b W(\Phi(\z),\z)=0$.

	\end{proof}

	\section{Effective dynamics}
	\label{sec:4}
	In this section we prove the rest of \thmref{thm3}.
	We first derive the effective dynamics \eqref{2.0}-\eqref{2.0.1}, 
	and then use this to 
	derive Parts 2 and 4 of \thmref{thm3}.
	
	\begin{theorem}
		\label{thm5}
		Let $\Phi:M'\to H^k$ be the map defined in \defnref{LSM}.
		Let $\Si_*=x_*(\Sb)$ be an admissible surface. Let $\Si_t=x_t(\Sb)$
		be the global solution to \eqref{W}
		with initial configuration $\Si|_{t=0}=\Si_*$ as in \thmref{thm1}.
		
		Then there exist $\al>0$,  $T=O(R^{-\al})$, and
		a path $\z_t\in M'$, such that
		\begin{equation}
			\label{4A}
			\norm{\Phi(\z_t)-x_t^N}_{H^k}=O(R^{-3})\quad (t\ge T).
		\end{equation}
		
		Moreover, the path $\z_t$ evolves according to 
		\begin{equation}
			\label{4B}
			\dot \z=\frac{1}{4\pi}\grad G(\z)+O(R^{-3}),
		\end{equation}
		where the leading term in the r.h.s.  is of the order $O(R^{-2})$.
	\end{theorem}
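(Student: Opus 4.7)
The plan is to define $\z_t := S(x_t)$ via the nonlinear projection of \lemref{lem2.1}, so $\z_t$ is the barycenter of $x_t$, and correspondingly to decompose
\begin{equation*}
	x_t = \theta(\Phi(\z_t)+\xi_t,\z_t),
\end{equation*}
where the fluctuation $\xi_t$ satisfies the orthogonality constraint \eqref{4.4}, namely $\inn{\xi_t}{f_\al(\z_t)}_{L^2}=0$ for $\al=0,\ldots,3$. This decouples the evolution into a slow four-dimensional ODE on $M'$ and a fast dissipative equation on the infinite-dimensional complement. To justify this decomposition, \thmref{thm1} guarantees global existence, while \thmref{thm2} provides smooth convergence of $x_t$ to a surface of Willmore type. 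In particular $x_t$ enters the neighborhood $U_\delta$ of \lemref{lem2.1} after some finite time $T = O(R^{-\al})$, where the polynomial rate comes from quantifying the dissipation estimates underlying \thmref{thm2}.

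For $t\ge T$, differentiating $x_t = \theta(\Phi(\z_t)+\xi_t,\z_t)$ in time, projecting onto the normal bundle, and equating with the pullback velocity $\b W(\Phi(\z_t)+\xi_t,\z_t)$ from \eqref{2.2} yields
\begin{equation*}
	\di_\phi\theta^N(\Phi+\xi,\z)\bigl(\di_{\z^\beta}\Phi\,\dot\z^\beta+\di_t\xi_t\bigr)+\di_{\z^\beta}\theta^N(\Phi+\xi,\z)\,\dot\z^\beta = \b W(\Phi+\xi_t,\z_t).
\end{equation*}
Pairing with the basis $f_\al$ of $T_{\theta(\Phi(\z),\z)^N}E^N$ and differentiating the constraint $\inn{\xi_t}{f_\al}_{L^2}=0$ in $t$ closes a $4\times 4$ system for $\dot\z$. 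Using the chain-rule identity $\di_{\z^\al}G(\z) = \inn{\b W(\Phi(\z),\z)}{f_\al}_{L^2}$ (cf.\ \eqref{3.1}) together with invertibility of the Gram matrix $\inn{f_\al}{f_\beta}$ (from \propref{prop2}), one obtains
\begin{equation*}
	\dot\z^\al = \grad^\al G(\z_t) + O(\norm{\xi_t}_{H^k}),
\end{equation*}
whose leading term is $O(R^{-2})$ by asymptotic expansion of $\b W$ around coordinate spheres (cf.\ \cite{eichmair2021large}*{Cor. 45}).

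Applying the complementary projection $\b P = I - P$ to the same identity yields
\begin{equation*}
	\di_t\xi_t = -\b L^g_{r,z}\xi_t + N_{r,z}(\xi_t) + \text{coupling terms proportional to }\dot\z,
\end{equation*}
with $\b L^g_{r,z}$ bounded below by $cR^{-2}$ on $\b P H^k$ (cf.\ \eqref{2.7}). The crux is then a Lyapunov bound for $\Lambda(t):=\norm{\xi_t}_{H^k}^2$ of the form
\begin{equation*}
	\od{\Lambda}{t} \le -cR^{-2}\Lambda + O(R^{-8}),
\end{equation*}
obtained by combining the spectral gap with the quadratic estimate \eqref{2.11} for $N_{r,z}$ and the size $\dot\z = O(R^{-2})$. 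Integrating and using the initial smallness $\norm{\xi_T}_{H^k}=O(R^{-2})$ (from \propref{prop1} together with continuity of $S$), a Gr\"onwall argument gives $\norm{\xi_t}_{H^k} = O(R^{-3})$ uniformly for $t\ge T$, which is \eqref{4A}. Re-injecting this bound into the slow equation upgrades its error term to $O(R^{-3})$, yielding \eqref{4B}.

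The principal obstacle is closing the Lyapunov estimate: the coercivity of $\b L^g_{r,z}$ is only $O(R^{-2})$, yet it must dominate both the coupling error from $\dot\z=O(R^{-2})$ and the nonlinearity $N_{r,z}(\xi)$, whose quadratic character helps only once $\xi$ is already small. This forces a careful use of the orthogonality $\xi_t\perp\spn\Set{f_\al}$ — so that the coupling produces no contribution inside $\ran P$ — together with a bootstrap that exploits the fourth-order smoothing of $\b L^g_{r,z}$ to promote a lower-order energy estimate to the full $H^k$ bound, while tracking the $R$-dependence at every stage.
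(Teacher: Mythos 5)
Your overall strategy is the one the paper uses: set $\z_t := S(x_t)$ via the nonlinear projection of \lemref{lem2.1}, decompose $x_t = \theta(\Phi(\z_t)+\xi_t,\z_t)$ with the orthogonality \eqref{4.4}, project onto $\spn\Set{f_\al}$ to read off the slow ODE, and run a Lyapunov/Gr\"onwall argument on the fluctuation. Your derivation of the slow equation (pairing with $f_\al$, invoking \eqref{3.1} and the invertibility of the Gram matrix from \propref{prop2}) matches the paper's Steps 1--2 essentially verbatim, up to the fact that the paper works with the oblique projection $Q_\z$ onto $\spn\Set{f_\al}$ rather than $\bar P=1-P$; since the orthogonality condition is $Q_\z\xi=0$, not $P\xi=0$, applying $\bar P$ to the flow does not isolate $\di_t\xi$ (the paper instead solves for $\di_t\xi$ algebraically, see \eqref{4.17}). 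This is a small but real mismatch, and it would bite you in the nonlinearity bookkeeping.

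The genuine gap is exactly where you flag it. You take $\Lambda(t):=\norm{\xi_t}^2_{H^k}$, assert $\dot\Lambda\le -cR^{-2}\Lambda+O(R^{-8})$, and then observe that the $O(R^{-2})$ coercivity of $\b L^g_{r,z}$ cannot dominate the coupling error $\dot\z=O(R^{-2})$ without further work; you describe the needed repair as a ``bootstrap\ldots to promote a lower-order energy estimate to the full $H^k$ bound'' but do not carry it out. The paper sidesteps this by choosing a different Lyapunov functional, $\Lambda(t)=\tfrac12\inn{\xi_t}{L_{\z_t}\xi_t}$, which is the energy quadratic form of the linearized gradient flow. Differentiating this $\Lambda$ produces the negative term $-\norm{L_\z\xi}^2_{H^{k-4}}$ rather than $-\inn{\xi}{L_\z\xi}_{H^k}$, and the key spectral input is \lemref{lemA2}: on $\ker Q_\z$ one has $\al\norm{\xi}^2_{H^k}\le\inn{\xi}{L_\z\xi}\le\beta\norm{\xi}^2_{H^k}$ with $\al,\beta$ independent of $\z$, which gives a $\z$-uniform decay rate $\gamma$ in the differential inequality \eqref{4.20} rather than one degenerating like $R^{-2}$. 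That uniform two-sided coercivity of the quadratic form (not of the operator) on $\ker Q_\z$ is the missing lemma in your argument; without it the announced bound $\dot\Lambda\le -cR^{-2}\Lambda+O(R^{-8})$ does not close the $O(R^{-3})$ estimate for $\norm{\xi_t}_{H^k}$, and in particular your $\di_t\xi$ estimate needed in \eqref{4.15} is not justified.
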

	\begin{remark}
		The function $G$ is defined in \thmref{thm4} and calculated in Appendix \ref{sec:A2}.
	\end{remark}
	
	\begin{proof}

		Step 1. 
		Take some $0<\delta\ll1$ to be determined.
		To begin with, denote $T>0$ the first time 
		$u_t$ enters the manifold $U_\delta$ defined in \eqref{2.3}.
		By the stability result \thmref{thm2}, this $T$ is finite,
		and there exists
		$\al>0$ such that  $T=O(\delta^{-\al})$.

		Now we prove  the claims \eqref{4A}-\eqref{4B},
		assuming the a priori estimate 
		\begin{equation}
			\label{A}
			x_t\in U_\delta \text{ for all $t>T$}.
		\end{equation}
		By \lemref{lem2.1}, so long as 
		\eqref{A} is satisfied,
		we can associate a path of barycenters $\z_t\in M'$ to the full flow $x_t$. 
		Later on, we show this Ansatz is satisfied with $\delta=O(R^{-3})$ by
		proving an a priori estimate for the fluctuation of $x_t$ 
		around its adiabatic part.

		Write $x=\theta(\Phi(\z)+\xi,\z)$, where
		$\z=S(x)$ is the barycenter as in \lemref{lem2.1}, and $\xi$ is the fluctuation.
		Then we can rewrite 
		the l.h.s. of \eqref{W} as
		\begin{equation}
			\label{4.5}
			\begin{aligned}
				\di_tx^N&=\di_\xi\theta(\Phi+\xi,\z)^N\di_t\xi+ \di_{\z^\al}\theta(\Phi+\xi,\z)^N\dot{\z^\al}\\
				&=A(\xi)\di_\phi\theta(\Phi,\z)^N\di_t\xi+B(\xi) \di_{\z^\al}\theta(\Phi,\z)^N\dot{\z^\al}\\
				&=A(\xi)(\z^0+\OO{1})\di_t\xi+B(\xi)f_\al\dot\z^\al,
			\end{aligned}
		\end{equation}
		where we define
		$$A(\xi):=\frac{\di_\xi\theta(\Phi+\xi,\z)^N}{\di_\phi\theta(\Phi,\z)^N},\quad
		B(\xi):=\frac{\di_{\z^\al}\theta(\Phi+\xi,\z)^N}{\di_{\z^\al}\theta(\Phi,\z)^N}.$$
		For simplicity, here and below we omit the dependence of $f_\al$ and $\Phi$ on $\z$.
		
		The expansion  \eqref{4.5} follows from the chain rule, \eqref{4.2}, and the assumption on the background metric,
		$g_{ij}=\delta_{ij}+O((\z^0)^{-1})$.
		From \eqref{3}, one can see that the prefactors $A(\xi),B(\xi)$ satisfy
		\begin{equation}
			\label{4.5.1}
			\norm{A(\xi)-1}_{H^k}+\norm{B(\xi)-1}_{H^k}\ls \norm{\xi}_{H^k}.
		\end{equation}


		Let $Q=Q_{\z(t)}$  be the projection onto the tangent space
		$T_{\theta(\Phi(\z_t),\z_t)^N}E^N$, as given explicitly in \eqref{3.2}.
		The space $\ran Q$ is spanned by the four vectors $f_\al$,
		which we computed in \eqref{2.17}-\eqref{2.18}.
		Applying $Q$ to \eqref{4.5}, we find
		\begin{equation}
			\label{4.7}
			Q\di_tx^N=(\z^0+O_{H^k}(1))Q\di_t\xi +f_\al\dot \z^\al+\OO{\xi(\lvert\dot\z\rvert+\di_t\xi)}. 
		\end{equation}
		
		By the definition of barycenter, \eqref{4.4}, we know $Q\xi=0$. 
		Differentiating this,  we find 
		\begin{equation}
			\label{4.8}
			Q(\di_t\xi)=-(\di_tQ)\xi=-(\di_{\z^\al} Q\dot \z^\al)\xi.                                                       	
		\end{equation}         	
		Here, the partial Fr\'echet derivative  $\di_{\z^\al} Q$ maps a real number to a linear operator from $H^k\to \ran Q\subset H^k$. 
		From the explicit formula \eqref{3.2} for $Q$ and the estimate \eqref{7},
		we get the uniform bound $\norm{\di_{\z^\al} Q}_{\Rb\to L(H^k,H^k)}\ls (\z^0)^{-2}$. 
		Thus
		plugging \eqref{4.8} to \eqref{4.7} gives
		\begin{equation}
			\label{4.9}
			Q\di_tx^N=f_\al\dot\z^\al+\OO{\xi(\lvert\dot\z\rvert+\di_t\xi)}+R^{-1}\OO{\xi\lvert\dot\z\rvert}
		\end{equation}

		Step 2.
		Next, expanding the r.h.s. of \eqref{W} at $\theta(\Phi(\z),\z)$, we find
		\begin{equation}
			\label{4.10}
			\begin{split}
				W(x)+\l H(x)&=\b{W}(\Phi(\z)+\xi,\z)\\
				&=\b{W}(\Phi(\z),\z))+L_\z\xi +N_\z(\xi).
			\end{split}
		\end{equation}
		Here, as in Section \ref{sec2},  $\b{W}:Y^k\to H^{k-4}$ is the pullback of $W+\lambda H$, and $L_\z$
		is the partial Fr\'echet derivative $\di_\phi\b{W}$ evaluated at $\theta(\Phi(\z),\z)$.
		The nonlinear term $N_\z(\xi)$ is defined by collecting the rest terms in this expansion.
		
		Applying $Q$ to \eqref{4.10}, we find 
		\begin{equation}
			\label{4.11}
			Q\b{W}(\Phi(\z)+\xi,\z)=Q\b{W}(\Phi(\z),\z)+QL_\z\xi +QN_\z(\xi).
		\end{equation}
		By the chain rule and the definitions \eqref{2.2} \eqref{3.2}, 
		the first term in \eqref{4.11} can be written as
		\begin{equation}
			\label{4.12}
			Q\b{W}(\Phi(\z),\z)=V^{\al\beta}\inn{\grad ^N \W(\theta(\Phi(\z),\z))}{f_\al}f_\beta.
		\end{equation}
		By \eqref{3.1}, this expression equals to $V^{\al\beta}\di_{\z^\al} G(\z)f_\beta$.
		
		By the uniform estimates $\norm{Q}_{H^k\to H^k}\ls 1$ and \eqref{B1.6'}, we can
		bound the nonlinearity as 
		\begin{equation}
			\label{4.13}
			\norm{QN_\z(\xi)}_{H^{k-4}}=O(\norm{\xi}^2_{H^k}).
		\end{equation}
		
		Lastly, using the approximate zero modes property of the elements in $\ran Q$,
		which we show in \lemref{lemA1}, one sees   that the restriction of
		$L_\z$ is small on $\ran Q$. In particular, this  implies for
		$\xi,\xi'\in H^k$,
		$$
		\inn{QL_\z\xi}{\xi'}=\inn{\xi}{L_\z Q\xi'}= O(R^{-2})\norm{\xi}_{H^k}\norm{\xi'}_{H^{k-4}}.
		$$
		Plugging $\xi'=QL_\z\xi$ into this expression ,  we find 
		\begin{equation}
			\label{4.14}
			\norm{QL_\z\xi}_{H^{k-4}}= O(R^{-2})\norm{\xi}_{H^k}.
		\end{equation}
		
		
		Recall the Ansatz \eqref{A}, which implies $\norm{\xi}_{H^k}<\delta\ll1$.
		Collecting \eqref{4.9}, \eqref{4.11}-\eqref{4.14}, plugging these back to \eqref{4.5},\eqref{4.10}, and rearranging, we find
		\begin{equation}
			\label{4.15}
			\norm{f_\al\dot{\z^\al}-V^{\al\beta}\di_{\z^\al}G(\z)f_\beta}_{H^{k-4}}\ls\frac{R^{-2}O(\delta)+O\del{\delta\norm{\di_t\xi}_{H^k})}}{1-O(\delta)-R^{-1}O(\delta)}.
		\end{equation}
		
		
		Consider the map $$\fullfunction{\psi}{\Rb^4}{\ran Q\subset
			H^{k-4}}{\eta^\al}{V^{\al\beta}\eta^\al f_\beta}.$$
		This map is linear. Moreover, using the fact that the matrix $V^{\al\beta}=O(1)\delta_{\al\beta}$ is invertible, one can show  $\psi$ is invertible on $\ran Q$,  and
		the operator norm  of its inverse
		$\psi^{-1}:\ran Q\subset H^{k-4}\to \Rb^4$ is of the order $O(1)$.
		
		Now we rewrite 
		$f_\al \dot \z \al = \psi (\eta^\al)$ with 
		\begin{equation}
			\label{eta}
			\eta^\al = V_{\al\beta}\dot \z^\beta.
		\end{equation}
		Note that this choice of $\eta^\al$ is unique.
		Then we can conclude from \eqref{4.15} and the discussion above  that 
		\begin{equation}\label{4.15'}
			\abs{\eta^\al-\di_{\z^\al}G(\z)}\ls\frac{R^{-2}O(\delta)+O\del{\delta\norm{\di_t\xi}_{H^k})}}{1-O(\delta)-R^{-1}O(\delta)}.
		\end{equation}

		Step 3. From \eqref{4.15'}, the relation \eqref{eta}, and the estimate $V_{\al\beta}=4\pi\delta_{\al\beta}+O(R^{-2})$ which follows from \eqref{2.17}-\eqref{2.18}(see
		also \propref{prop2}),  
		we see that \eqref{4B} follows once we can
		prove an a priori estimate of the form 
		\begin{equation}
			\label{claim}
			\norm{\xi}_{H^k}+\norm{\di_t\xi}_{H^k}\ls\norm{\xi_T}_{H^k},
		\end{equation} where $\xi_T$ is the 
		fluctuation when the flow $x_t$ first enters the manifold $U_\delta$, as in 
		Ansatz \eqref{A}.

		To this end, define 
		\begin{equation}
			\label{4.21}
			\Lambda(t):=\frac{1}{2}\inn{\xi_t}{L_{\z_t}\xi_t}.
		\end{equation}
		We show this is a Lyapunov-type functional along the flow of $\xi$.
		
		Recall $\b Q=1-Q$ is the complement of the (not necessarily orthogonal) projection $Q$.

		Indeed, differentiating $\Lambda$, we find
		\begin{equation}
			\label{4.22}
			\dot \Lambda(t)=\inn{\di_t\xi}{L_{\z_t}\xi}+\frac{1}{2}\inn{(\di_tL_\z)\xi}{\xi}.
		\end{equation}
		The second term  is bounded as  
		\begin{equation}\label{4.16}
			\abs{\inn{(\di_tL_\z)\xi}{\xi}}\le \norm{(\di_\z L_\z\dot\z)\xi}_{H^{k-4}}\norm{\xi}_{L^2}=O(\lvert\dot\z\rvert\norm{\xi}_{H^k}^2),
		\end{equation} where
		we used the estimate \eqref{B1.5'}.
		
		To bound the first term in the r.h.s. of \eqref{4.22}, 
		we isolate the dynamics of $\xi$ after equating \eqref{4.5} to \eqref{4.10}. We find
		\begin{equation}
			\label{4.17}
			\di_t\xi = \frac{1}{\di_\phi\theta(\Phi,\z)^NA(\xi)}(-\b W(\Phi,\z)-L_\z\xi -N_\z(\xi)-B(\xi)f_\al\dot\z^\al).
		\end{equation}
		Consider the r.h.s. of \eqref{4.16}-\eqref{4.17}. Using \eqref{B1.4'}-\eqref{B1.6'} and the governing equation for barycenter, \eqref{4.15},
		we find for $R\gg1$,
		\begin{align}
			\label{4.17.0}
			\di_\phi\theta(\Phi,\z)^NA(\xi)=&O_{H^k}(\z^0(1+\xi))+O_{H^k}(1+\xi)\\
			\label{4.17.1}
			\abs{\inn{\b W(\Phi,\z)}{L_\z\xi}}\ls& R^{-4}\norm{\xi}_{L^2},\\
			\label{4.17.2}
			\abs{\inn{N_\z(\xi)}{L_\z\xi}}\ls& \norm{\xi}_{H^k}^3,\\
			\label{4.17.3}
			\abs{\inn{f_\al\dot\z^\al}{L_\z\xi}}\ls& R^{-2}(1+\norm{\xi}_{H^k})\norm{\xi}_{H^k}.
		\end{align}
		
		Plugging  \eqref{4.16}-\eqref{4.17.3} back to \eqref{4.22}, we find that
		so long as $\norm{\xi}_{H^k}<1/2$ and $R\gg1$,  there holds
		\begin{equation}
			\label{4.19}
			\begin{aligned}
				\dot \Lambda(t)\le& C_1R^{-3}\norm{\xi}_{H^k}\\&-\norm{L_\z\xi}_{H^{k-4}}^2+(C_2R^{-3}+C_3R^{-1}\norm{\xi}_{H^k})\norm{\xi}_{H^k}^2.
			\end{aligned}
		\end{equation}
		
		By the coercivity of $L_\z$ shown in \lemref{lemA2}, the first term in the second line  of \eqref{4.19} can be bounded by $-\norm{L\xi}_{H^k}^2\le-\beta\norm{\xi}_{H^k}$ for some $\beta>0$ independent
		of $\z$. This, together with the upper bound in \eqref{A1.7}, implies that there exists some $\gamma>0$ independent of $\z$, such that
		\begin{equation}
			\label{4.20}
			\dot \Lambda(t)+\gamma \Lambda(t)\le C_1R^{-3}\norm{\xi}_{H^k}+(C_2R^{-3}+C_3R^{-1}\norm{\xi}_{H^k}-\beta/2)\norm{\xi}_{H^k}^2.
		\end{equation}
		
		Thus, so long as 
		\begin{equation}
			\label{A'}
			C_2R^{-3}+C_3R^{-1}\norm{\xi}_{H^k}-\beta/2<0,
		\end{equation}
		we can drop the last term in \eqref{4.20}  to deduce  that 
		\begin{equation}
			\label{4.23}
			\od{(\Lambda(t)e^{\gamma t})}{t}\ls R^{-3}e^{\g t}.
		\end{equation}
		
		Integrating \eqref{4.23} on $[T,\infty)$
		and using \eqref{A1.7}, we find 
		\begin{equation}
			\label{4.24}
			\norm{\xi_t}_{H^k}^2\ls \Lambda(t)\ls e^{-\g t}\norm{\xi_T}_{H^k}^2+ R^{-3}M(t)\quad (M(t):=\sup_{T\le t'\le t}\norm{\xi_{t'}}_{H^k}),
		\end{equation}
		where $\xi_T$ is the fluctuation at $t=T$. Taking supremum
		of both sides of \eqref{4.24} in $t$, and then dividing by $M(t)$ (which is positive for all $t$ in the nontrivial case), we find
		\begin{equation}
			\label{4.25}
			M(t)\ls e^{-\g t}\norm{\xi_T}+R^{-3}.
		\end{equation}
		This, in particular, implies that the Ansatz \eqref{A'} is satisfied as long as $R\gg1$.
		Iterating this process,
		we get the estimate $M(t)\ls R^{-3}$. Plugging this back to the dynamics \eqref{4.15} and
		\eqref{4.17} for $\z$ and $\xi$ respectively, we find that the velocity of the fluctuation satisfies
		$\norm{\di_t\xi}_{H^k}\ls R^{-4}.$ Thus for sufficiently large $R$, the claim \eqref{claim} follows.  
		
	\end{proof}


	\begin{corollary}
		\label{cor1}
		Let $\z_t\in M'$ be a flow evolving according to \eqref{4B}.
		Then there exist some $T>0$ and a global solution $x_t$ to \eqref{W}, such that \eqref{4A} holds
		on the time interval $T\le t\le T+R$ with this choice of $\z_t$. 
	\end{corollary}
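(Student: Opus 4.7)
The plan is to solve \eqref{W} forward in time from the initial datum $x_T := \theta(\Phi(\z_T),\z_T)$, and then verify via an ODE comparison that the barycenter path of this solution stays sufficiently close to the prescribed $\z_t$ for \thmref{thm5} to deliver \eqref{4A} on the full interval $[T,T+R]$.

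Concretely, by \propref{prop1}(3) the initial datum $x_T$ is admissible, so \thmref{thm1} produces a global solution $x_t$ to \eqref{W}. Its barycenter $\tilde\z_t := S(x_t)$ (\lemref{lem2.1}) coincides with $\z_T$ at $t=T$ because the corresponding fluctuation vanishes by construction. Applying \thmref{thm5} to this $x_t$ yields both the effective dynamics $\dot{\tilde\z} = \grad G(\tilde\z) + O(R^{-3})$ and the fluctuation estimate $\norm{x_t^N - \Phi(\tilde\z_t)}_{H^k} = O(R^{-3})$ for $t\ge T$, where $x_t^N$ denotes the height function of $x_t$ as a graph over $S_{\tilde\z_t}$.

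The next step is to compare $\tilde\z_t$ with the prescribed $\z_t$. Both satisfy effective ODEs that agree with $\grad G$ up to an $O(R^{-3})$ remainder, and both equal $\z_T$ at time $T$. Since $\grad G$ is of size $O(R^{-2})$ and its Jacobian on the relevant compact region of $M'$ is controlled by $\sup|\nabla^2 G| = O(R^{-2})$ (this upper bound suffices; the decay is typically stronger), Gronwall's inequality gives $|\tilde\z_t - \z_t| \ls (t-T)R^{-3}\exp(CR^{-2}(t-T))$. On $[T,T+R]$ the exponential factor is $O(1)$, so $|\tilde\z_t - \z_t| = O(R^{-2})$. Using \eqref{7}, this yields $\norm{\Phi(\tilde\z_t) - \Phi(\z_t)}_{H^k} = O(R^{-4})$. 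A short calculation based on \eqref{2.12} shows that re-expressing $x_t$ as a graph over $S_{\z_t}$ instead of $S_{\tilde\z_t}$ perturbs the height function in $H^k$ by at most $O(|\tilde\z_t - \z_t|/\z_t^0) = O(R^{-3})$. Summing these three contributions gives \eqref{4A}.

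The main obstacle is the Gronwall comparison over the long interval $[T,T+R]$. Naively, integrating an $O(R^{-3})$ error over a time of length $R$ already accumulates an $O(R^{-2})$ drift in $\z$, and a Lipschitz amplification factor $e^{LR}$ could in principle spoil everything. The saving grace, already signalled in the remark after \eqref{4B}, is that $\grad G$ itself is of order $R^{-2}$ and its Jacobian of order $R^{-2}$, so $LR = O(R^{-1})$ and the exponential stays $O(1)$. This is precisely why the corollary's validity interval is restricted to length $R$: on significantly longer intervals the exponential amplification would no longer be controllable, and one would have to close the argument through additional cancellations rather than through raw Gronwall bounds.
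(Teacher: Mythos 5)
Correct, and essentially the same strategy as the paper's proof: start the flow from $\Phi(\z_T)$, invoke \thmref{thm5} for its barycenter path $\tilde\z_t$ and the $O(R^{-3})$ fluctuation bound, compare $\tilde\z_t$ to the prescribed $\z_t$, and transfer through the derivative estimate \eqref{7}. The one place you diverge is the path comparison: you run Gronwall using the Lipschitz bound $|\nabla^2 G|=O(R^{-2})$ to get $|\tilde\z_t-\z_t|=O(R^{-2})$, whereas the paper's estimate \eqref{4.26} bypasses Gronwall entirely and bounds $|\dot\z_t-\dot{\tilde\z}_t|$ crudely by $O(R^{-2})$ (the individual size of each velocity from \eqref{4B}), so the integral over the length-$R$ interval accumulates only $O(R^{-1})$, which times the $R^{-2}$ from \eqref{7} already lands at $O(R^{-3})$. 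Your refinement is therefore unnecessary on this interval, though your remark that the Lipschitz amplification $e^{LR}=O(1)$ precisely because $L=O(R^{-2})$ correctly explains why even the crude argument is safe and why the validity window is restricted to length $R$. One genuine improvement: you notice that $x_t^N$ in \eqref{4A} implicitly depends on which coordinate sphere one graphs over and supply the missing $O(|\tilde\z_t-\z_t|/\z_t^0)$ re-expression term; the paper's \eqref{4.26} quietly treats $x_t^N$ as a single fixed object when passing from $\z_t$ to $\tilde\z_t$, and your extra step makes that passage rigorous.
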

	\begin{proof}
		Let $T>0$ be the first time 
		$u_t$ enters the manifold $U_\delta$ defined in \eqref{2.3} (this $T$ is finite by \thmref{thm2}).
		Consider the flow $x_t,\,t\ge T$ generated under \eqref{W} by $x_*=\Phi(\z_T)$, as in
		\thmref{thm1}. Let $\z_t,\,t\ge T$ be the flow of barycenter as in \eqref{4A}. 
		Then using \eqref{7} and \eqref{4A}, we can estimate
		\begin{equation}
			\label{4.26}
			\begin{aligned}
				\norm{\Phi(\tilde\z_t)-x_t^N}_{H^k}&\le \norm{\Phi(\z_t)-x_t^N}_{H^k}+\norm{\Phi(\z_t)-\Phi(\tilde\z_t)}_{H^k}\\&\ls R^{-2}\abs{\z_t-\tilde\z_t}+R^{-3}\\&\ls R^{-2}\int_{T}^{T+R}\abs{\dot\z_t-\dot{\tilde{\z_t}}}+R^{-3}
				\\&\ls R^{-3}\quad (T\le t\le T+R).
			\end{aligned}
		\end{equation}
		In the last step we use the effective dynamics \eqref{4B}.

	\end{proof}
	
	In the following corollary, we display the $t$-dependence in subscripts.
	\begin{corollary}
		Suppose the embedding $\theta(\Phi(\z),\z)$ parametrizes a surface of Willmore type (i.e.  static solution  to \eqref{W}).
		Then $\theta(\Phi(\z),\z)$ is uniformly stable with small
		area-preserving $H^k$-perturbation if  $\z$ is a strict local
		minimum of the function $G$ defined in \thmref{thm4}.
	\end{corollary}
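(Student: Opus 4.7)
The plan is to reduce the infinite-dimensional stability question, via \thmref{thm5}, to a stability question for the effective ODE \eqref{4B} on the $4$-manifold $M'$, and then close a Lyapunov argument using the effective action $G$ itself as the Lyapunov function. Let $y_*$ be an initial admissible surface $H^k$-close to the stationary Willmore-type surface $x_\z:=\theta(\Phi(\z),\z)$ in the sense of \defnref{defn1}, and let $y_t$ be the resulting global ACW flow (furnished by \thmref{thm1}). \thmref{thm5} then produces, for all $t\ge T_0=O(R^{-\al})$, a barycenter path $\tilde\z_t$ with $\norm{\tilde\Phi(\tilde\z_t)-y_t}_{X^k}=O(R^{-3})$, obeying \eqref{4B} on the area-constrained submanifold $N_c:=\Set{\z'\in M':\abs{\tilde\Phi(\z')(\ss)}=c}$. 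Since $x_\z$ is stationary under \eqref{W}, the footnote definition of uniform stability reduces to confining $\tilde\z_t$ near $\z$ and then transporting that bound back through $\tilde\Phi$.

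Next I would use $G\vert_{N_c}$ as Lyapunov function. Differentiating $G(\tilde\z_t)$ along \eqref{4B} and invoking the gradient-flow structure of \eqref{W}, the leading term is $-c\abs{\grad_{N_c}G(\tilde\z_t)}^2$ with an $O(R^{-3})\abs{\grad_{N_c}G(\tilde\z_t)}$ remainder coming from the error in the effective ODE. Because $\z$ is a strict local minimum of $G\vert_{N_c}$, the sub-level sets $U_\eps:=\Set{\z'\in N_c:G(\z')<G(\z)+\eps}$ form a shrinking neighborhood basis of $\z$; on the boundary $\partial U_\eps$ the constrained gradient satisfies $\abs{\grad_{N_c}G}\ge\kappa(\eps)>0$, so the Lyapunov derivative is strictly negative there once $R\gg\kappa(\eps)^{-1}$, making $U_\eps$ forward-invariant. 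Taking $y_*$ sufficiently $H^k$-close to $x_\z$ forces $\tilde\z_{T_0}\in U_\eps$ via the $C^1$-continuity of the projection $S$ from \lemref{lem2.1}, and invariance then confines $\tilde\z_t$ to $U_\eps$ for all $t\ge T_0$.

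Finally, combining this confinement with $\norm{\tilde\Phi(\tilde\z_t)-y_t}_{X^k}=O(R^{-3})$ and the $C^1$-continuity of $\tilde\Phi$ gives $\norm{y_t-x_\z}_{X^k}\ls\abs{\tilde\z_t-\z}+R^{-3}\ls\diam(U_\eps)+R^{-3}$, which can be driven below any prescribed threshold by shrinking both the initial perturbation and $R^{-1}$. I expect the main obstacle to be the Lyapunov step in two regards. First, strict local minimality of $G$ (absent a non-degenerate Hessian) does not supply a pointwise coercivity $\abs{\grad_{N_c}G(\z')}\gtrsim\abs{\z'-\z}$, so confinement must be argued purely through sub-level sets of $G$ rather than a linear coercivity estimate. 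Second, because \eqref{4B} is gradient-like only modulo the $O(R^{-3})$ perturbation, $\tilde\z_t$ cannot converge to $\z$ exactly, and one only obtains stability up to an $O(R^{-3})$-tube around $x_\z$; matching this with the $\eps$-free formulation in the footnote therefore requires taking $R$ large in terms of $\eps$. A secondary technical point is to track the Lagrange-multiplier contribution so that the effective flow genuinely stays on $N_c$ and $\grad_{N_c}G$ is the correct quantity to estimate throughout the Lyapunov computation.
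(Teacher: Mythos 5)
Your proposal follows the same route as the paper's proof: reduce via \thmref{thm5} to the barycenter dynamics \eqref{4B}, use the strict local minimality of $G$ to confine $\tilde\z_t$ near $\z$, transport that confinement back through \eqref{4A}, and use the $C^1$-regularity of the projection $S$ from \lemref{lem2.1} to place the initial barycenter close to $\z$. You are in fact more careful than the paper on the confinement step---the paper asserts $\z_t\to\z$ and $\abs{\z_t-\z}<\delta$ for arbitrary $\delta>0$, which the $O(R^{-3})$ forcing in \eqref{4B} does not permit---and your sub-level-set Lyapunov argument, with the observation that one can only trap $\tilde\z_t$ in an $O(R^{-3})$-tube and must treat $R$ as large relative to the target accuracy to match the footnote's $\eps$-formulation, closes a genuine gap that the paper's own proof glosses over.
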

	\begin{proof}
		Suppose $\z$ is a strict local minimum of $G$.  Then  every flow $\z_t$ 
		starting at some $\z'$ near $\z$ under \eqref{4B} converges to $\z$, i.e.  $\z_t\to \z$ as $t\to \infty$. Now, for this $\z$,
		consider 
		a perturbation $x':=\theta(\Phi(\z)+\xi,\z)$ with $\norm{\xi}_{H^k}\ll1$. By the regularity
		of the nonlinear projection $S$ in \lemref{lem2.1}, this perturbation
		has barycenter $\z'$ close to $\z$. It follows that  the flow of barycenters $\z_t$ associated to the flow of embeddings $x_t$ generated by $x'$ under \eqref{W} satisfies $\abs{\z_t-\z}<\delta$ for any $\delta>0$ and all sufficiently large $t>T$. 
		If we choose $\delta\ll R^{-3}$, then we can conclude from \eqref{4A} that $\norm{x_t^N-\Phi(\z)}_{H^{k-4}}\ls R^{-3}$ for all large $t$.

	\end{proof}
	
	\section*{Acknowledgments}
	
	The Author is supported by Danish National Research Foundation grant
	CPH-GEOTOP-DNRF151. The Author thanks T. K\"orber for the introduction to the subject and helpful remarks.
	
	\section*{Declarations}

	\begin{itemize}
		\item Competing interests: The Author has no conflicts of interest to declare that are relevant to the content of this article.
		\item Data availability: Data sharing is not applicable to this article as no new data were created or analyzed in this study.
	\end{itemize}

	\appendix
	\section{The Abstract Lyapunov-Schmidt map}
	\label{A0}
	Motivated by the synonymous reduction procedure,
	in general we can define Lyapunov-Schmidt map as follows:
	\begin{definition}
		\label{def2.1}
		Let $X\subset Y$ be two Hilbert spaces. 
		Let $U\subset X$ be an open set, and $u_0\in U$.
		Let $P:X\to X$ be an orthogonal projection onto some subspace of $X$, and $\bar{P}:=1-P$ be the complement of $P$.
		
		The Lyapunov-Schmidt map $\Phi=\Phi_{P,u_0}$ 
		is defined on the following set:
		
		$$ \Set{\begin{aligned}
				F:U\subset X\to Y:& \text{$F$ is $C^1$, $\bar{P}L\bar{P}: \bar{P}X\to \bar{P}Y$ has bounded inverse, }\\&\text{where $L:=dF(u)|_{u=u_0}:X\to Y$ is}\\&\text{ the linearized operator of $F$ at $u_0$}.
			\end{aligned}
		}$$
		
		For such $F$, by Implicit Function Theorem, there exists an
		open neighbourhood
		$V\subset PX$ around $v=0$ and a $C^1$ map $w: V\to \b P X$
		such that 
		$\b{P}F(u_0+v+w(v))=0$ for every $v\in V$.
		This map $w$ parametrizes a manifold $E$ in $X$. This manifold $E$
		is a normal perturbation of  $\ran P$. 
		
		Let $w'$ be the Fr\'echet
		derivative of $w$. (This is a map from $V$ to the linear operators on $X$.)
		Then the tangent space at $w=w(v)$ to $E$
		is spanned by $w'(v)(V)$, and can be trivialized as a subspace in $X$,
		with dimension up to $\dim V$. 
		
		Now, the image $\Phi(F)$ of the Lyapunov-Schmidt map
		is given by 
		$$\fullfunction{\Phi(F)}{V\subset PX}{X}{v}{Q_vF(u_0+v+w(v))},$$
		where $w=w(v)$ is as above, and $Q_v$ is the projection onto the tangent space $T_{w(v)}E$.
	\end{definition}
	
	The notion above is first informally conceived in 
	\cite{MR3824945}, in connection with the Feshbach-Schur map introduced in \cites{MR1639713,MR1639709}.
	Here, notice that the  domain $V$ of the map $\Phi(F)$ 
	depends on the spectrum of $L$  only.
	
	Essentially, the map $\Phi$ identifies a reduced space
	which can be either finite-dimensional or otherwise more tractable. 
	The behavior of a map in the domain of $\Phi$ on this reduced space, 
	which is locally isomorphic to 
	$PX$, determines the behavior of
	this map in the vicinity of $0$ in the full space $X$. 
	
	As such, one can view this map $\Phi$ in the context of the
	theory of infinite-dimensional invariant manifolds for semiflows in Banach spaces.
	See \cite{MR1445489, MR1675237,MR2439610} and the references therein.
	
	We will treat the analytical and geometric properties of
	the abstract Lyapunov-Schmidt map elsewhere.

	In the context of this paper, the family of maps involved is $F(\cdot, r,z):H^k\to H^{k-4}$, 
	defined in \defnref{defn2}. These maps are parametrized by 
	the manifold $M'$. 
	The projection $P$ is defined as the orthogonal projection onto the subspace
	$\spn\Set{1,y^1,y^2,y^3}\subset H^k$. The vector $u_0$ is the zero function in $H^k$,
	which, through $\theta$ given in \eqref{3}, corresponds to a large coordinate sphere. 
	\section{Area-rreserving property of \eqref{W}}\label{sec:ap}
	In this section, following \cite{MR4236532}*{Sec. 2}, we show that \eqref{W} is area-preserving.
	
	Let $x=x_t:\Sb\to (M,g)$ be a family of embeddings, 
	and $\Si:=x(\Sb)$.
	Suppose $x$ solves \eqref{W}, namely
	\begin{align}
		\label{ap1}
		\di_tx^N=& -W(x)-\l H(x),\quad W(x):=\Lap H(x)+H(x)(\Ric_M(\nu,\nu)+\lvert\Acirc\rvert^2(x)),\\
		\label{ap2}
		\l=&\l(t)=\frac{1}{\int_{\Si}H^2\,d\mu}\int_{\Si}\del{\abs{\grad H}^2-H^2\Ric_M(\nu,\nu)-H^2\lvert\Acirc\rvert^2}\,d\mu,
	\end{align}
	where $\mu= \mu_\Si^g$ is the  canonical measure on $\Si$, induced by the embedding $x$ and background metric $g$.
	
	If $\l$ is given by \eqref{ap2}, then 
	integration by part on the first term in the r.h.s. of \eqref{ap2} yields
	\begin{equation}
		\label{ap3}
		\l(t)\int_{\Si_t}H^2\,d\mu=-\int_{\Si}WH\,d\mu.
	\end{equation}
	On the other hand, for $\Si=\Si_t=x_t(\Sb)$, the first variation formula
	for the area functional reads
	\begin{equation}\label{ap4}
		\di_t\abs{\Si}=-\int_\Si H\di_tx^N\,d\mu.
	\end{equation}
	Plugging the expression for $\di_tx^N$ from \eqref{ap1} into \eqref{ap4}, we find 
	\begin{align*}
		\di_t\abs{\Si}=& \int_\Si (W+\l H)H\,d\mu\\
		=&0,
	\end{align*}
	where the last step follows from identity \eqref{ap3}.
	This shows \eqref{W} is indeed area-preserving.
	
	\section{Asymptotics of $\W$}\label{sec:A2}
	In this section, we record various expansions related to 
	the  Willmore energy $\W$ at large coordinate spheres.
	Most of the results can be read off from  \cite{eichmair2021large}*{Sec. C},
	\cite{MR2785762}*{Secs. 3,7}.
	
	Fix some $R\gg1$ and $0<\delta\ll1$. 
	In what follows, we restore the parameter $r,z$ and assume   $r\ge R$, $z\in \Rb^3,\,\abs{z}<1-\delta$. 
	
	\begin{proposition}
		\label{propB1}
		For $\Omega$ given in \eqref{2.2.1}, $G$ given in \thmref{thm4}, there hold
		\begin{align}
			\label{B1.1} \Omega(0,r,z)=&4\pi-16\pi r^{-1}+2\pi r^{-2}\del{\frac{10-6\abs{z}^2}{(\abs{z}^2-1)^2}+3\frac{1}{\abs{z}}\log\frac{1+\abs{z}}{1-\abs{z}}}+ O(r^{-3}),\\
			\label{B1.2} G(r,z)=&\Omega(0,r,z)+ O(r^{-5}).
		\end{align}
	\end{proposition}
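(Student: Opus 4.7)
The strategy is to treat the two expansions separately. The first (expansion of $\Omega$ on a coordinate sphere in asymptotically Schwarzschild geometry) is a direct computation from conformal geometry and spherical integration; the second (the correction from the Lyapunov-Schmidt map) follows from a Taylor expansion combined with the defining equation \eqref{6} for $\Phi$.

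For \eqref{B1.1}, I would first compute $\W(S_{r,z})$ in the unperturbed Schwarzschild metric $g_S=\varphi^4\delta_{ij}$ with $\varphi=1+\abs{x}^{-1}$, using the conformal transformation formulae
\begin{equation*}
	H_{g_S}=\varphi^{-2}H_\delta+4\varphi^{-3}\di_{\nu_\delta}\varphi,\qquad d\mu_{g_S}=\varphi^4\,d\mu_\delta.
\end{equation*}
Parametrizing $x=z+rv$ with $v\in\ss$ and using $H_\delta=2/r$, one expands $\abs{z+rv}=r+v\cdot z+O(r^{-1})$ and collects like powers of $r^{-1}$ in $H_{g_S}^2\,d\mu_{g_S}$. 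Integration over $v\in\ss$ then produces the leading constant $4\pi$, the $r^{-1}$ correction $-16\pi/r$, and the $\abs{z}$-dependent $O(1)$ contribution; this last piece arises from the classical spherical integrals $\int_\ss\abs{v+w}^{-k}\,d\Omega$ for $w=z/r$, which admit closed-form evaluations in $\abs{w}<1$, producing both the rational function $(10-6\abs{z}^2)/(\abs{z}^2-1)^2$ and the logarithm $\abs{z}^{-1}\log\frac{1+\abs{z}}{1-\abs{z}}$. The perturbation $h$ contributes an additional error of order $O(r^{-3})$ by the decay assumption \eqref{h} together with the asymptotic-flatness conditions \eqref{1.2}-\eqref{1.3}.

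For \eqref{B1.2}, since $\b W$ is the $L^2$-gradient of $\Omega(\cdot,r,z)$, Taylor expansion along the segment $s\mapsto s\Phi(\z)$ yields
\begin{equation*}
	G(\z)-\Omega(0,r,z)=\int_0^1\inn{\b W(s\Phi(\z),\z)}{\Phi(\z)}_{L^2}\,ds=\inn{\b W(0,\z)}{\Phi(\z)}_{L^2}+\tfrac{1}{2}\inn{L_\z^g\Phi(\z)}{\Phi(\z)}_{L^2}+O(\norm{\Phi(\z)}_{H^k}^3).
\end{equation*}
Because $\Phi(\z)\in\b P H^k$ and $P$ is an orthogonal projection, the first pairing reduces to $\inn{\b P\b W(0,\z)}{\Phi(\z)}_{L^2}$. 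Substituting $\b P\b W(0,\z)=-\b P L_\z^g\Phi(\z)-\b P N_\z(\Phi(\z))$ from \eqref{6} collapses the identity to
\begin{equation*}
	G(\z)-\Omega(0,r,z)=-\tfrac{1}{2}\inn{L_\z^g\Phi(\z)}{\Phi(\z)}_{L^2}+O(\norm{\Phi(\z)}_{H^k}^3).
\end{equation*}
Using $\norm{\Phi(\z)}_{H^k}=O(r^{-2})$ from \eqref{7} together with $\norm{L_\z^g\Phi(\z)}_{L^2}=O(r^{-4})$ (which follows from \eqref{6} and the bound $\norm{\b P\b W(0,\z)}_{H^{k-4}}=O(r^{-4})$), both remaining terms are $O(r^{-6})$, \emph{a fortiori} $O(r^{-5})$.

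The main obstacle is the first step, specifically the explicit evaluation of the $\abs{z}$-dependent spherical integrals producing the rational and logarithmic terms in \eqref{B1.1}. These computations are carried out carefully in \cite{eichmair2021large}*{Sec. C} and \cite{MR2785762}*{Secs. 3, 7}, and the plan would be to invoke those results rather than redo the calculation from scratch.
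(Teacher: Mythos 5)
Your proposal takes essentially the same route as the paper: \eqref{B1.1} is quoted from \cite{eichmair2021large}*{Lem. 42} (the paper emphasizes that the asymptotic evenness hypothesis \eqref{1.3} is what makes the explicit $z$-dependent closed form possible, which your sketch also implicitly uses), and \eqref{B1.2} is a Taylor expansion of $\Omega(\cdot,r,z)$ around $\phi=0$ combined with the decay of $\Phi$ from \eqref{7}.

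One genuine improvement in your write-up deserves note. The paper's proof of \eqref{B1.2} stops at the first-order expansion with a remainder $O(\norm{\Phi(r,z)}_{H^k}^2)$; taken at face value this only yields $O(r^{-4})$, not the claimed $O(r^{-5})$. Your version carries the expansion one order further and, crucially, substitutes the Lyapunov--Schmidt equation $\bar P\b W(0,\z)=-\bar L_\z^g\Phi-\bar P N_\z(\Phi)$ into the linear pairing, which produces the cancellation that reduces the correction to $-\tfrac12\inn{L_\z^g\Phi}{\Phi}+O(\norm{\Phi}^3)=O(r^{-6})$. This is the step the paper leaves implicit, and it is what actually justifies the stated order. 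One small precision: it is $\bar P L_\z^g\Phi$ that is $O(r^{-4})$ in $L^2$ (from \eqref{2.8}), not $L_\z^g\Phi$ itself; but since the pairing is against $\Phi\in\bar P H^k$ and $\bar P$ is $L^2$-orthogonal, only the $\bar P$-component contributes, so the estimate $\inn{L_\z^g\Phi}{\Phi}=O(r^{-6})$ still follows. With that caveat, your argument is correct and slightly sharper than the paper's.
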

	\begin{proof}
		\eqref{B1.1} can be found in \cite{eichmair2021large}*{Lem. 42}.
		To derive this expansion, one uses the assumption \eqref{1.3}
		that the scalar curvature $\Sc$ is asymptotically even.
		\eqref{B1.2}
		follows from the expansion $$G(r,z)=\Omega(0,r,z)+\inn{\b W(0,r,z)}{\Phi(r,z)}+ O (\norm{\Phi(r,z)}_{H^k}^2),$$ the expansion for $\b W(0,r,z)$ below, and the
		estimate \eqref{7}. 
	\end{proof}
	
	The following results are used to derive the key estimate \eqref{7}.
	\begin{proposition}
		\label{propB2}
		Let $k\ge 4$. Let $\phi=\phi_{r,z}\in H^k$ be a family of functions with $\norm{\phi}_{H^k}\ll1$, and suppose
		the map $(r,z)\mapsto \phi_{r,z}$ is $C^2$.
		Let  $\b W$ be as in \eqref{2.2}. Then for all $m+\abs{\al}\le2$, there hold
		\begin{align}
			\label{B1.3}
			\b W(\phi,r,z)=&\b W(0,r,z)+ L_{r,z}\phi +N_{r,z}(\phi),\\
			\label{B1.4}
			\norm{\di_r^m\di^\al_z\b W(0,r,z)}_{H^{k-4}}=& O(r^{-(4+m)}),\\
			\label{B1.5}
			\norm{\di_r^m\di^\al_zL_{r,z}\phi}_{H^{k-4}}
			\ls& \norm{\di_r^m\di^\al_z\phi}_{H^k},\\
			\label{B1.6}
			\norm{\di_r^m\di^\al_zN_{r,z}(\phi)}_{H^{k-4}}\ls& \norm{\di_r^m\di^\al_z\phi}_{H^k}^2.
		\end{align}
	\end{proposition}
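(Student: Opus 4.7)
The plan is to prove the four estimates in order, relying on (i) the explicit formulas for $W+\l H$ acting on a graph over the coordinate sphere $S_{r,z}$, (ii) the decomposition $g=g_S+h$ with the pointwise decay \eqref{h}, and (iii) the Moser-type multiplication estimate on $H^k(\ss)$ valid for $k\ge 4$. The overarching scaling principle is that under the change of variables $v\mapsto z+rv$ the coordinate sphere $S_{r,z}$ pulls back to the unit sphere equipped with a metric whose departure from round is $O(r^{-1})$; the fourth-order Willmore operator therefore picks up an overall $r^{-4}$ factor, with each additional $\di_r$-derivative costing one more power of $r$ and each $\di_{z^j}$-derivative being scale-neutral (since $z$-translation does not affect $r$).

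First I would treat \eqref{B1.3} as a definition: declare
\[
N_{r,z}(\phi):=\b W(\phi,r,z)-\b W(0,r,z)-L_{r,z}\phi,
\]
which is automatically the second-order Taylor remainder of the $C^2$ map $\phi\mapsto\b W(\phi,r,z)$; the $C^2$ regularity follows from the smoothness of $\theta$ in $\phi$ and the fact that $W+\l H$ is a polynomial/rational expression in $\phi$ and its derivatives up to order four, all controlled by $\norm{\phi}_{H^k}$ via the Sobolev embedding $H^k\hookrightarrow C^{k-2}$ for $k\ge 4$. For \eqref{B1.4}, the base case $m=|\al|=0$ is \cite{eichmair2021large}*{Cor.~45}; the higher-derivative cases follow from differentiating the same explicit formulas: each $\di_{z^j}$ acts as a translation on the sphere and preserves the order $O(r^{-4})$, while each $\di_r$ loses exactly one power of $r$, because the mean curvature on $S_{r,z}$ scales as $r^{-1}$ and its successive $r$-derivatives as $r^{-1-\ell}$.

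For \eqref{B1.5} one reads off from the explicit expression \eqref{4} that $L_{r,z}$ is a fourth-order linear operator on $\ss$ whose coefficients are smooth and uniformly bounded in $(r,z)$, together with all their partial derivatives in $(r,z)$ up to order two; the product rule combined with the Sobolev boundedness of these coefficient-multiplications then yields the claim. For \eqref{B1.6} one starts from the integral remainder
\[
N_{r,z}(\phi)=\int_0^1(1-s)\,d^2\b W(s\phi,r,z)(\phi,\phi)\,ds,
\]
and bounds $d^2\b W$ as a continuous symmetric bilinear form $H^k\times H^k\to H^{k-4}$ by the Moser inequality applied to the polynomial/rational structure of $\b W$. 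Differentiating in $(r,z)$ via the chain rule produces a sum of terms, each of the form (a derivative of a uniformly bounded coefficient)$\times$(a product of two derivatives of $\phi$ in $(r,z)$); after absorbing factors of $\norm{\phi}_{H^k}\ll 1$ into the leading quadratic, the stated bound follows.

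The main obstacle is \eqref{B1.6} when $m+|\al|=2$: the chain rule produces many cross terms involving derivatives of $\phi$ of mixed orders, and packaging them into a clean quadratic bound on the top-order derivative requires exploiting both the smallness of $\norm{\phi_{r,z}}_{H^k}$ and the matching smallness of its lower-order $(r,z)$-derivatives, which in the intended application to \propref{prop1} are bounded bootstrap-wise by \eqref{7} itself. A secondary technical point is the uniformity-in-$(r,z)$ of the Moser constants, which follows from the bounded-geometry property of $S_{r,z}$ after rescaling to the unit sphere.
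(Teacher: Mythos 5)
Your plan for \eqref{B1.3}--\eqref{B1.5} tracks the paper: \eqref{B1.3} is the Taylor decomposition valid by the regularity of $\b W:Y^k\to H^{k-4}$, \eqref{B1.4} cites \cite{eichmair2021large}*{Cor.\ 45} for the base case, and \eqref{B1.5} follows because $L_{r,z}$ is a linear operator with uniformly bounded $(r,z)$-dependent coefficients. Your rescaling heuristic (each $\di_r$ costs one power of $r$, $\di_z$ is scale-neutral) supplies a derivation of the $r^{-(4+m)}$ rate that the paper leaves implicit, which is a mild improvement.

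The place where you genuinely diverge is \eqref{B1.6}. You write $N_{r,z}(\phi)=\int_0^1(1-s)\,d^2\b W(s\phi,r,z)(\phi,\phi)\,ds$ and then bound $d^2\b W$ as a bilinear form $H^k\times H^k\to H^{k-4}$ via a Moser estimate. This presupposes that $\b W(\cdot,r,z)$ is $C^2$ on $H^k\to H^{k-4}$, i.e.\ that $\Omega$ is $C^3$; but the paper only establishes (Sec.~\ref{sec:2.2}) that $\Omega$ is $C^2$ and $\b W$ is $C^1$ in $\phi$. The paper avoids this by proceeding concretely: it substitutes the second-variation formulas of \cite{MR2780248}*{Sec.\ 7.2} to obtain the explicit expression \eqref{B1.7} for $N_{r,z}(\phi)$, then observes that $N_{r,z}$ is a finite sum of terms of the form $N_{r,z}(D^{\le4}\phi,g)$ with coefficients smooth and uniformly bounded in $(r,z)$, each contributing a factor at least quadratic in $\phi$; the quadratic bound and its $(r,z)$-differentiability are then read off term by term. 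The explicit route both sidesteps the $C^2$-regularity issue and makes the uniformity in $(r,z)$ of the Moser-type constants manifest, which in your write-up is only sketched by appealing to bounded geometry after rescaling.

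You correctly flag that the case $m+|\al|=2$ of \eqref{B1.6} is delicate because the product rule scatters derivatives across coefficients and $\phi$, but you do not close this point; neither does the paper fully (and in the application to \propref{prop1} it in fact uses the weaker \eqref{2.11} with $\norm{\phi}_{H^k}^2$ on the right, not $\norm{\di_r^m\di_z^\al\phi}_{H^k}^2$). If you want to run the abstract argument, you would need to either verify the extra derivative of $\b W$ by hand from the Willmore formulas (at which point you are back to the paper's explicit computation) or downgrade the estimate to match \eqref{2.11}, which is all that is used downstream.
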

	\begin{proof}
		The expansion \eqref{B1.3} is valid by the regularity of $\b W$ from $Y^k\to H^{k-4}$.
		\eqref{B1.4} follows from the formula \cite{eichmair2021large}*{Cor. 45}. Since 
		$L_{r,z}$ is linear in $\phi$, 
		\eqref{B1.5} is immediate. For \eqref{B1.6}, we calculate $N_{r,z}(\phi)$ explicitly as
		\begin{equation}
			\label{B1.7}
			\begin{aligned}
				N_{r,z}(\phi)=&\Lap H(\theta(\phi,r,z))+H(\theta(\phi,r,z))\Bigl(\Ric_M(\nu,\nu)+\lvert\Acirc\rvert^2(\theta(\phi,r,z))\Bigr)
				\\
				&-\Lambda^2_{r,z}\phi-\frac{1}{2}H^2_{r,z}\Lambda_{r,z}\phi-\lambda \Lambda_{r,z}\phi\\&-2H_{r,z}g(\Acirc_{r,z},\grad^2\phi)-2H_{r,z} \Ric_M(\nu,\grad \phi)+2 \Acirc_{r,z}(\grad H_{r,z},\grad \phi)\\
				&-V_{r,z}\phi+O(r^{-4}),
			\end{aligned}
		\end{equation}
		where $\lambda$ is the Lagrange  multiplier in \eqref{W}, 
		$\Lambda=-\Lap-(\abs{A}^2+\Ric_M(\nu,\nu))$ is the Jacobi
		operator, $V$ is some smooth 
		function depending on the sphere $\theta(0,r,z)$ and the background
		metric only, and 
		the subscript on $r,z$ indicates  evaluation at the sphere $\theta(0,r,z)$. 
		Formula \eqref{B1.7} follows from \cite{MR2780248}*{Sec. 7.2}. 
		
		Now, inspecting each term in \eqref{B1.7}, one can see from this explicit expression that the nonlinearity  $N_{r,z}$ 
		is a finite sum of the form  $N_{r,z}=N_{r,z}(D^{\le4}\phi,g)$, where $D^{\le 4}\phi$ denotes all derivatives of $\phi$
		up to order $4$. Moreover, each term in this finite sum, together with its derivatives,
		is uniformly bounded by some $C>0$  independent of $r,z$ as a map from $H^k\to H^{k-4}$ . This implies $$\norm{\di_r^m\di^\al_zN_{r,z}(\phi)}_{H^{k-4}}\ls \norm{\di_r^m\di^\al_z\phi}_{H^k}^M$$
		for some $M=M(m,\al)\ge 2$. For $\norm{\phi}_{H^k}\ll1$, this implies \eqref{B1.6}.

	\end{proof}
	
	By perturbing the estimates in \propref{propB2}, we get the following results:
	\begin{proposition}
		\label{propB3}
		Let $k\ge 4$. Let $\phi=\phi_{r,z}\in H^k$ be a family of functions with $\norm{\phi}_{H^k}\ll1$.
		Let  $\b W$ be as in \eqref{2.2}. 
		
		Suppose
		the map $(r,z)\to \phi_{r,z}$ is $C^2$, and satisfies \eqref{7}.
		Then,  for every $\norm{\xi}\ll1$ , there hold
		\begin{align}
			\label{B1.3'}
			\b W(\phi+\xi,r,z)=&\b W(\phi,r,z)+ \tilde L_{r,z}\xi +\tilde N_{r,z}(\xi),\\
			\label{B1.4'}
			\norm{\b W(\phi,r,z)}_{H^{k-4}}=& O(r^{-4}),\\
			\label{B1.5'}
			\norm{\di_r^m\di^\al_z\tilde L_{r,z}\xi}_{H^{k-4}}
			\ls& \norm{\di_r^m\di^\al_z\xi}_{H^k}\quad(m+\abs{\al}\le1),\\
			\label{B1.6'}
			\norm{\tilde N_{r,z}(\xi)}_{H^{k-4}}\ls& \norm{\xi}_{H^k}^2.
		\end{align}
	\end{proposition}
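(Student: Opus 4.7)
The plan is to obtain \propref{propB3} as a perturbation of \propref{propB2} around the base point $\phi$ rather than $0$, by Taylor-expanding $\b W(\phi+\cdot,r,z)$ in the second slot. I would set
\[
\tilde L_{r,z}:=\di_\phi\b W(\phi,r,z)\colon H^k\to H^{k-4},\qquad \tilde N_{r,z}(\xi):=\b W(\phi+\xi,r,z)-\b W(\phi,r,z)-\tilde L_{r,z}\xi,
\]
so that \eqref{B1.3'} holds by construction.

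For \eqref{B1.4'}, I would apply the expansion \eqref{B1.3} to $\b W(\phi,r,z)$ itself and bound each piece: $\b W(0,r,z)=O(r^{-4})$ in $H^{k-4}$ by \eqref{B1.4}; $L_{r,z}\phi=O(r^{-5})$ by combining the explicit formula \eqref{4} (from which $L_{r,z}$ has operator norm $O(r^{-3})$ as $H^k\to H^{k-4}$, since $\Lap^2+2r^{-2}\Lap+O(r^{-4})$ scales like $r^{-4}$ on $S_{r,z}$ and $\di_\phi\theta$ scales like $r$) with the decay $\norm{\phi}_{H^k}\ls r^{-2}$ from \eqref{7}; and $N_{r,z}(\phi)=O(\norm{\phi}_{H^k}^2)=O(r^{-4})$ by \eqref{B1.6} and \eqref{7}. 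Summing gives \eqref{B1.4'}.

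For \eqref{B1.5'}, I would differentiate \eqref{B1.3} in $\phi$ to write $\tilde L_{r,z}=L_{r,z}+dN_{r,z}(\phi)$, where $dN_{r,z}(\phi)$ is the Fr\'echet derivative of $N_{r,z}(\cdot)$ at $\phi$. The bound on $L_{r,z}\xi$ comes directly from \eqref{B1.5}. Inspecting the explicit expression \eqref{B1.7} for $N_{r,z}$ term by term, each summand is at least quadratic in $\phi$ and its derivatives, so $dN_{r,z}(\phi)$ carries one factor of $\phi$ and satisfies $\norm{dN_{r,z}(\phi)\xi}_{H^{k-4}}\ls\norm{\phi}_{H^k}\norm{\xi}_{H^k}\ll\norm{\xi}_{H^k}$. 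The $r,z$ derivatives of $\tilde L_{r,z}$ (for $m+\abs{\al}\le 1$) are then handled by the chain rule, using \eqref{B1.5}--\eqref{B1.6} for the parametric dependence of $L_{r,z}$ and $dN_{r,z}(\cdot)$, and using \eqref{7} to bound $\di_r\phi,\di_z\phi$. For \eqref{B1.6'}, I would use the integral form of Taylor's remainder
\[
\tilde N_{r,z}(\xi)=\int_0^1(1-s)\,\di^2_\phi\b W(\phi+s\xi,r,z)(\xi,\xi)\,ds,
\]
and verify from the structure of $\b W$ that the Hessian is uniformly bounded as a continuous bilinear form $H^k\times H^k\to H^{k-4}$ for $\phi,\xi$ in a small $H^k$-neighbourhood of $0$; the smoothness of $\b W$ in $\phi$ already follows from the explicit formula underlying \eqref{B1.7}.

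The main technical obstacle I anticipate is ensuring that all estimates respect the careful tracking of powers of $r^{-1}$ needed for \eqref{B1.4'}, and that the $r,z$ derivatives in \eqref{B1.5'} are correctly controlled by the chain rule applied simultaneously to $\phi$, $L_{r,z}$, and $dN_{r,z}(\phi)$; although each ingredient is already in hand from \propref{propB2} and \eqref{7}, the bookkeeping of which terms survive and at what order is the key task.
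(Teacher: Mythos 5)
Your proposal is correct and follows the same underlying idea as the paper's proof — treat \propref{propB3} as a perturbation of \propref{propB2} around the moving base point $\phi=\phi_{r,z}$ rather than $0$, using the $C^2$-regularity of $\b W(\cdot,r,z)$ together with the decay \eqref{7} — but you spell out an important step that the paper's one-paragraph proof leaves implicit. Specifically, the estimate \eqref{B1.5} as stated only gives $\norm{L_{r,z}\phi}_{H^{k-4}}\ls\norm{\phi}_{H^k}=O(r^{-2})$, which is not small enough to absorb into the $O(r^{-4})$ bound \eqref{B1.4'}; you correctly note that the explicit form \eqref{4} together with the $r$-scaling of the Laplace--Beltrami operator on $S_{r,z}$ and of $\di_\phi\theta(0,r,z)$ from \eqref{2.6} upgrades this to $\norm{L_{r,z}}_{H^k\to H^{k-4}}=O(r^{-3})$, giving $L_{r,z}\phi=O(r^{-5})$. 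That sharper operator-norm observation, plus the quadratic bound on $N_{r,z}(\phi)$ from \eqref{B1.6} and \eqref{7}, is exactly what makes the summation close to $O(r^{-4})$, and the paper relies on it tacitly under the phrase ``with appropriate choice of $r,\al$.'' Your handling of \eqref{B1.3'}, \eqref{B1.5'}, and \eqref{B1.6'} via $\tilde L_{r,z}=L_{r,z}+dN_{r,z}(\phi)$ and the integral-form Taylor remainder is also sound and matches the intent of the paper's argument; the only caution is to make sure the chain-rule bookkeeping for $\di_r,\di_z$ of $dN_{r,z}(\phi)$ really keeps within $m+\abs{\al}\le1$ and uses the explicit structure \eqref{B1.7} (each summand at least quadratic in $\phi$ and its low-order derivatives), which you have flagged as the main technical task and which does go through.
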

	\begin{proof}
		The point is that, by the regularity of $\Omega(\cdot,r,z)$ on $H^k$ with $k\ge4$,
		the map $\b W(\cdot,r,z)$ together with its derivatives varies
		continuously as $\phi$ varies.
		Thus, if $\phi$ satisfies \eqref{7}, then \eqref{B1.4'}-\eqref{B1.6'} follow from \eqref{B1.4}-\eqref{B1.6} with appropriate choice of $r,\al$. 
	\end{proof}
	
	\section{Spectral properties of $L_{\z}$}
	Let $k\ge 4$.
	Fix $\z=(r,z^j)\in M'\subset \Rb_{>R}\times B_{1}(0)$ for sufficiently large $R$.
	In this section, 
	we prove various uniform estimates for the linearized operator $L_{\z}$
	of $\b{W}(\cdot,\z)$ at $\Phi(\z)$.
	For more details, see \cite{MR2785762}*{Sections 3, 7}.

	Recall  $P:H^k\to  H^k$ is the $L^2$-orthogonal projection onto the span consisting of the constants and the three spherical coordinates.
	$Q=Q_\z: H^k\to H^k$ is the (not necessarily orthogonal) projection onto
	the tangent space at
	$\theta(\Phi(\z),\z)^N$ to $E^N:=\Set{\theta^N:\theta\in E}\subset H^k$,
	where the manifold $E$ is given in \defnref{LSM}.
	
	\begin{lemma}[approximate zero modes]
		\label{lemA1}
		For every $\phi\in H^k$, there holds
		\begin{equation}
			\label{A1.1}
			\norm{L_\z Q\phi}_{H^{k-4}}\ls O(R^{-2}) \norm{\phi}_{H^k}.
		\end{equation}
	\end{lemma}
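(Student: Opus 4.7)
The idea is to combine the explicit formula \eqref{3.2} for $Q$ with a perturbative analysis of $L_\z$ on each basis element $f_\al$ of $\ran Q$. Since $Q\phi = V^{\al\beta}\inn{f_\al}{\phi}_{L^2} f_\beta$ with both $V^{\al\beta}$ and $\norm{f_\al}_{L^2}$ uniformly $O(1)$ (by \propref{prop2} and \eqref{2.17}--\eqref{2.18}), the lemma reduces to showing $\norm{L_\z f_\al}_{H^{k-4}} = O(R^{-2})$ for each $\al = 0, 1, 2, 3$. Conceptually this is expected: the $f_\al$ correspond to geometric symmetry directions (translations for $\al \ge 1$, radial motion for $\al = 0$) that are exact zero modes of the Willmore operator in the flat background, hence only approximate zero modes in the asymptotically flat setting.

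Next I would decompose both $f_\al$ and $L_\z$ against their flat-space counterparts. From \eqref{2.17}--\eqref{2.18} combined with the bound \eqref{7} on $\di_{\z^\al}\Phi$, write $f_\al = y^\al + e_\al$ with $\norm{e_\al}_{H^k} = O(R^{-2})$ and $y^\al$ a generator of $\ker L_{r,z}^0 = \spn\{1, y^1, y^2, y^3\}$ (\remref{rmk1}). In parallel, by \eqref{V} together with a second-order expansion of $\b W$ in $\phi$, decompose $L_\z = L_{r,z}^0 + V_{r,z} + (L_\z - L_{r,z}^g)$, where $\norm{V_{r,z}}_{H^k\to H^{k-4}} = O(R^{-4})$ and (the crucial estimate) $\norm{L_\z - L_{r,z}^g}_{H^k\to H^{k-4}} = O(R^{-2})$. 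Using $L_{r,z}^0 y^\al = 0$, one gets
\begin{equation*}
L_\z f_\al = L_{r,z}^0 e_\al + V_{r,z}(y^\al + e_\al) + (L_\z - L_{r,z}^g)(y^\al + e_\al),
\end{equation*}
and every summand on the right is $O(R^{-2})$ in $H^{k-4}$, using the uniform boundedness of $L_{r,z}^0\colon H^k \to H^{k-4}$ (indeed of size $O(R^{-3})$ by the scaling of $\Lap$ on $S_{r,z}$) together with the operator-norm bounds above. This gives the desired estimate on $L_\z f_\al$, and hence on $L_\z Q\phi$.

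The main obstacle is the operator bound $\norm{L_\z - L_{r,z}^g}_{H^k\to H^{k-4}} = O(R^{-2})$. This follows from the identity $L_\z - L_{r,z}^g = \di_\phi N_{r,z}(\Phi)$ (read off from \eqref{B1.3} with $\phi$ replaced by $\Phi(\z)+\xi$) combined with $\norm{\Phi(\z)}_{H^k} = O(R^{-2})$ from \eqref{7} with $m = \abs{\al} = 0$. To control the Fr\'echet derivative of the nonlinearity $N_{r,z}$ at $\phi = \Phi(\z)$, one differentiates \eqref{B1.7} once more in $\phi$ and exploits the polynomial structure of $N_{r,z}$ in $\phi$ and its derivatives, with coefficients uniformly bounded thanks to the $C^2$-regularity of the pullback $\Omega$ on $Y^k$ for $k\ge 4$; equivalently one appeals to \propref{propB3} applied to higher-order variations.
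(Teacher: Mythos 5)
Your proof is correct and follows essentially the same route as the paper: a three-way splitting of $L_\z$ into the flat operator $L^0_{r,z}$ (which annihilates $\ker L^0_{r,z}$), the metric perturbation $V_{r,z}=O(R^{-4})$, and the graph perturbation $L_\z-L^g_{r,z}=O(R^{-2})$, combined with $O(R^{-2})$-closeness of $\ran Q$ to $\ran P$. The paper phrases the last point as the operator-norm bound $\norm{Q-P}=O(R^{-2})$ from \eqref{3.2.1} rather than your basiswise $f_\al=y^\al+e_\al$, but these are the same estimate.
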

	\begin{proof}
		Recall that the functions $f_\al$ are calculated in \eqref{2.17}-\eqref{2.18},
		and that the projection $Q$ maps onto $\spn\Set{f_\al}$. Using these facts, 
		we have shown $\norm{Q-P}_{H^k}\ls R^{-2}$  as  in \eqref{3.2.1}. 
		
		Denote by $L^0$ the linearized operator at the coordinate sphere $S_{\z}$ with
		Euclidean background metric, and by $L^g$ the same with the metric $g$ on $M$. 
		These two operators satisfy 
		\begin{equation}
			\label{A1.2}
			\norm{L^0_\z\phi}_{H^{k-4}}=\norm{L^g_\z\phi}_{H^{k-4}}+O(R^{-3})\norm{\phi}_{H^k}.
		\end{equation} 
		See a discussion about this in \remref{rmk1}.
		
		By the definition of $P$ in \defnref{defn2}, $L^0$
		vanishes on $\ran P$. It follows that 
		\begin{equation}
			\label{A1.3}
			\norm{L^0_\z Q\phi}_{H^k}\le\norm{L^0_\z }_{H^k\to H^{k-4}}\norm{(P-Q)\phi}_{H^k}=O_{H^k}(R^{-2})\norm{\phi}_{H^k}.
		\end{equation}
		Since the functional $\W$ is $C^2$ on $X^k$, we have by \eqref{7} that
		\begin{equation}
			\label{A1.4}
			\norm{L_\z-L^g_\z}_{H^k\to H^{k-4}}\ls \norm{\z^\al\di_{\z^\al}\Phi}_{H^k}=O(R^{-2}).
		\end{equation}
		The claim follows by combining \eqref{A1.2}-\eqref{A1.3} with \eqref{A1.4}.
	\end{proof}
	
	\begin{lemma}[coercivity]\label{lemA2}
		Let $k\ge4,\,R\gg1$. 
		There exist $\al,\,\beta>0$ depending on $k,\,R$ and  the background metric $g$ only,
		such that
		\begin{equation}
			\label{A1.7}
			\al\norm{\xi}_{H^k}^2\le \inn{\xi}{L_\z\xi}\le  \beta\norm{\xi}_{H^k}^2\quad (\xi\in \ker Q).
		\end{equation}
	\end{lemma}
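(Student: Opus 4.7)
The plan is to establish \eqref{A1.7} in three stages: first, prove coercivity for the Euclidean model $L_\z^0$ on the $L^2$-complement $\b PH^k$ of its kernel; second, transfer it to the full operator $L_\z=L_\z^g$ by treating $V_\z:=L_\z^g-L_\z^0$ as a small perturbation; and third, pass from $\ker P$ to $\ker Q$ using the closeness estimate $\norm{P-Q}_{H^k\to H^k}=O(R^{-2})$ from \eqref{3.2.1}. The upper bound in \eqref{A1.7} follows cheaply from Cauchy-Schwarz together with the boundedness $L_\z:H^k\to H^{k-4}$ recorded in \eqref{B1.5}, so the real content is the lower bound.

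For step one, I would invoke the spectral analysis of $L_\z^0=\Lap^2+2r^{-2}\Lap+O(r^{-4})$ on the coordinate sphere $S_\z$ from \cite{MR2785762}*{Sec.~7}: this operator is self-adjoint on $L^2$, diagonalized by spherical harmonics, and has kernel exactly $\ran P=\spn\Set{y^0,\ldots,y^3}$, with a positive spectral gap on $\b PH^k$ already recorded in the proof of \propref{prop1} (the lower bound on $\b L_{r,z}^0$ feeding into \eqref{2.7}). Combining this gap with a Gårding-type estimate for the fourth-order elliptic operator $L_\z^0$, and then bootstrapping by iterated elliptic regularity, yields $\inn{\xi}{L_\z^0\xi}\ge\al_0\norm{\xi}_{H^k}^2$ on $\b PH^k$, with $\al_0=\al_0(R,k)>0$. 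For step two, I use the expansion $L_\z=L_\z^0+V_\z$ with $\norm{V_\z}_{H^k\to H^{k-4}}=O(r^{-4})$ from \eqref{V} and the discussion following it. Since $V_\z$ decays in $r$ strictly faster than the inverse power controlling the gap of $L_\z^0$, for $R$ sufficiently large the perturbation is dominated and coercivity persists on $\b PH^k$.

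For step three, given $\xi\in\ker Q$ I decompose $\xi=P\xi+\b P\xi$. The condition $Q\xi=0$ combined with \eqref{3.2.1} gives $\norm{P\xi}_{H^k}=\norm{(P-Q)\xi}_{H^k}\ls R^{-2}\norm{\xi}_{H^k}$, so $\b P\xi$ captures $\xi$ up to a uniformly small correction. Expanding $\inn{\xi}{L_\z\xi}$ and applying step two to the main term $\inn{\b P\xi}{L_\z\b P\xi}$ while absorbing the cross terms via boundedness of $L_\z$ and the smallness of $P\xi$ finishes the proof, provided $R$ is large. The main obstacle will be the bootstrap from the $H^2$-coercivity that Gårding naturally supplies for a fourth-order elliptic operator up to the full $H^k$-coercivity stated in \eqref{A1.7}; this requires iterated elliptic regularity, exploiting the self-adjointness and ellipticity of $L_\z^0$ together with the spectral gap above. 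A secondary subtlety is uniformity of $\al,\beta$ in $z\in B_1(0)$, which follows from translation covariance of the Euclidean model and the uniform decay of the metric perturbation $h$ in \eqref{h}.
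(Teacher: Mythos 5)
Your proposal goes wrong at the central step where you claim that Gårding plus ``iterated elliptic regularity'' upgrades the $L^2$-gap to the full $H^k$-coercivity $\inn{\xi}{L_\z^0\xi}\ge\al_0\norm{\xi}_{H^k}^2$ on $\b PH^k$. For a fourth-order self-adjoint elliptic operator, the quadratic form $\inn{\xi}{L_\z\xi}$ involves only four derivatives of $\xi$, so the best lower bound one can hope for on general $\xi$ is an $H^2$-coercivity, not $H^k$ for $k\ge4$. Concretely, take $\xi$ concentrated on spherical harmonics of high degree $n$: then $\inn{\xi}{L_\z\xi}\sim n^4\norm{\xi}_{L^2}^2\sim\norm{\xi}_{H^2}^2$, while $\norm{\xi}_{H^k}^2\sim n^{2k}\norm{\xi}_{L^2}^2$, so the ratio $\inn{\xi}{L_\z\xi}/\norm{\xi}_{H^k}^2\to0$ as $n\to\infty$. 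The flagged ``main obstacle'' is not a technicality but a genuine obstruction: iterated elliptic regularity gives $\norm{\xi}_{H^k}\ls\norm{L_\z\xi}_{H^{k-4}}+\norm{\xi}_{L^2}$, which is the wrong direction and does not bound $\norm{\xi}_{H^k}^2$ by the quadratic form.

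The paper's proof sidesteps this by \emph{not} attempting $H^k$-coercivity for arbitrary $\xi\in\ker Q$. Instead it proves the $L^2$-lower bound $\inn{\xi}{L_\z\xi}\ge(\al/4)\norm{\xi}_{L^2}^2$ on $\ker Q$ (by citing \cite{MR2785762}*{Thm.~10} for the gap on $\ker P$ with the full metric $g$, then passing to $\ker Q$ via $\norm{P-Q}_{H^k\to H^k}=O(R^{-2})$ and the approximate zero-mode bound of \lemref{lemA1}), and then observes that the $\xi$ that actually appears in the application — the fluctuation along the ACW flow — satisfies the fourth-order elliptic equation obtained by rearranging \eqref{4.17}, namely $L_\z\xi=-\b W(\Phi,\z)-N_\z(\xi)+O(R^{-2})$. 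It is \emph{this} extra information that yields the reverse estimate $\norm{\xi}_{H^k}\ls\norm{\xi}_{L^2}$ via elliptic regularity, and hence the $H^k$-lower bound for the specific $\xi$ of interest. Your first two stages (spectral gap of the Euclidean model, perturbation by $V_\z=O(r^{-4})$) are a legitimate alternative to citing \cite{MR2785762}*{Thm.~10} for the $L^2$-gap on $\ker P$, and your third stage (passing from $\ker P$ to $\ker Q$ via \eqref{3.2.1}) matches the paper. But without restricting to fluctuations satisfying the elliptic equation, the claimed $H^k$-coercivity for arbitrary $\xi$ is simply false, and the argument cannot close.
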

	\begin{proof}
		The upper bound follows from the fact that $\b W$ is $C^2$ and the relation
		$L_\z=\di_\phi \b W(\Phi(\z),\z)$.
		Thus it suffices to find the lower bound.
		
		In \cite{MR2785762}*{Thm. 10}, it is shown that 
		\begin{equation}
			\label{A1.8}
			\inn{\xi}{L_\z\xi}\ge \al\norm{\xi}_{L^2}^2\quad (\xi\in \ker P),
		\end{equation}
		where $\al$ depends only on the lower bound $R$ and the ambient metric $g$.
		The claim now is 
		\begin{equation}
			\label{A1.9}
			\inn{\xi}{L_\z\xi}\ge \frac{\al}{4}\norm{\xi}_{L^2}^2\quad (\xi\in \ker Q),
		\end{equation}
		provided $R$ is sufficiently large. Notice that here we have one caveat, namely that 
		$Q$ is not necessarily orthogonal.
		
		Put $\b{Q}=1- Q$. Then we can rewrite \eqref{A1.9} as 
		\begin{equation}
			\label{A1.10}
			\inn{L_\z\xi}{\xi}=\inn{L_\z Q\xi}{\xi}+\inn{L_\z\b Q\xi}{\xi}.
		\end{equation}
		The first term is $O(R^{-2})\norm{\xi}_{H^k}^2$ by the approximate
		zero mode property \eqref{A1.1}. The second term further splits
		as $\inn{L_\z\b Q\xi}{\b Q\xi }+\inn{L_\z\b Q\xi}{Q\xi}=\inn{L_\z\b Q\xi}{\b Q\xi }+\inn{\b Q\xi}{LQ\xi}=\inn{L\b Q\xi}{\b Q\xi }+O(R^{-2})\norm{\xi}_{H^k}^2$,
		again by \eqref{A1.1}. Thus it suffices to find a lower bound of the quadratic form $\inn{L\b Q\xi}{\b Q\xi }\ge (\al/2) \norm{\xi}_{H^k}$.
		
		Consider the operator $\b QL \b Q$. By the uniform closeness \eqref{3.2.1}, we have 
		$\b QL \b Q=\b PL\b P+O_{H^k\to H^{k-4}}(R^{-2})$. This and $k\ge 4$ imply that 
		for sufficiently large $R$, we have  $\norm{\b QL \b Q}_{H^k\to L^2}\ge \al/2$ and therefore \eqref{A1.9}.
		
		Thus \eqref{A1.7} follows once we can find a uniform estimate $\norm{\xi}_{H^k}\ls \norm{\xi}_{L^2}. $
		This can be done using the fact that 
		$\xi$ solves \eqref{4.17}, which we can rewrite as a fourth order
		elliptic equation
		$$L_\z\xi=-\b W(\Phi,\z)-N_\z(\xi)+O(R^{-2}).$$
		By the regularity theory for elliptic operator of higher-order, this implies 
		that there is some $C>0$ depending on $R$, the background metric $g$,
		and the Sobolev order $k$ only, such that $\norm{\xi}_{H^k}\le C \norm{\xi}_{L^2}. $
		
	\end{proof}
	
	%
	%
	
	\bibliography{bibfile}
\end{document}